\documentclass[12pt,reqno]{amsart}%
\usepackage{amsmath, amsfonts, amssymb, amsthm, amscd, amsbsy}
\usepackage{fancyhdr}
\usepackage[usenames,dvipsnames,svgnames,x11names,hyperref]{xcolor}
\usepackage{geometry}
\usepackage{graphicx}
\usepackage{hyperref}
\usepackage{amsmath}
\usepackage{amsfonts}
\usepackage{amssymb}
\usepackage{lineno}%
\providecommand{\U}[1]{\protect\rule{.1in}{.1in}}
\hypersetup{colorlinks,breaklinks,
	linkcolor={Fuchsia},
	citecolor={ForestGreen},
	urlcolor={NavyBlue}}
\newtheorem{theorem}{Theorem}[section]
\theoremstyle{plain}

\newtheorem{corollary}{Corollary}[section]

\newtheorem{proposition}{Proposition}[section]

\numberwithin{equation}{section}
\allowdisplaybreaks

\newcommand{\mr}{\mathbb{R}}
\newcommand{\p}{\partial}
\newcommand{\ent}{\text{Ent}_{d\mu_w}}

\begin{document}
\title[Beckner, Poincar\'e and log Sobolev inequalities]{Log-Sobolev and Beckner inequalities and stability of Poincar\'e inequality with weighted Gaussian measures}
\author{Nguyen Lam}
\address{Nguyen Lam: School of Science and the Environment, Grenfell Campus, Memorial
University of Newfoundland, Corner Brook, NL A2H5G4, Canada}
\email{nlam@mun.ca}
\author{Guozhen Lu}
\address{Guozhen Lu: Department of Mathematics, University of Connecticut, Storrs, CT
06269, USA}
\email{guozhen.lu@uconn.edu}
\author{Andrey Russanov}
\address{Andrey Russanov: Department of Mathematics, University of Connecticut, Storrs,
CT 06269, USA}
\email{andrey.russanov@uconn.edu}

\begin{abstract}
We employ a Markov semigroup approach combined with the \( \Gamma \)-calculus to establish a generalized Beckner inequality associated with weighted Gaussian measures. As a direct consequence, we derive the corresponding Poincaré inequality in the same setting. Subsequently, by means of a duality argument, we investigate gradient and \(L^2\) stability estimates of the Poincaré inequality. Furthermore, we formulate a scale-dependent version of the Poincaré inequality for homogeneous Gaussian-type measures and apply it to analyze the stability of the Heisenberg Uncertainty Principle with homogeneous weights. Finally, we establish a Logarithmic Sobolev inequality for weighted Gaussian measures and utilize it to derive the Euclidean Logarithmic Sobolev inequality with homogeneous log-concave weights.

\end{abstract}
\maketitle

\section{Introduction}

Let \( w : \mathbb{R}^n \to [0, \infty) \) be a nonnegative function, with support given by the set
\[
\Sigma := \{ x \in \mathbb{R}^n : w(x) > 0 \}.
\]
We impose a mild convexity condition on \( w \) given by
\[
 -\nabla^2 (\log w(x)) \ge K\, \mathbb{I}
 \quad \text{for all } x \in \Sigma,\text{for some constant \( K > -1 \)}, \tag{$\star$}\label{star}
\]
where \( \mathbb{I} \) denotes the identity matrix. The largest constant \( K \) for which this inequality holds is denoted by \( K_w \). This structural assumption is particularly important for establishing the \emph{curvature-dimension (CD)} condition associated with the weighted Gaussian measure \( w(x)e^{-|x|^2/2}\,dx \).

Sometimes we also require that \( w \) is \emph{homogeneous of degree} \( \alpha \ge 0 \). That is, for every \( t > 0 \) and all \( x \in \Sigma \), the scaling relation $ w(tx) = t^{\alpha} w(x)$ holds. In this case, for simplicity, we assume that $\Sigma$ is an open convex cone in $\mathbb{R}^n$, with vertex at the origin such that $x \cdot \mathbf{\eta}=0$ on $\partial\Sigma$, where $\eta$ is the outward pointing unit normal of surface
elements $dS$. Typical examples include:
\begin{itemize}
  \item \emph{Monomial weights:} \( w(x) = x^A := |x_1|^{\alpha_1} \cdots |x_n|^{\alpha_n} \), with \( \alpha = \alpha_1 + \cdots + \alpha_n \);
  \item \emph{Dunkl-type weights:} \( w(x) = \prod_{\beta \in R_+} |\langle \beta, x \rangle|^{2k_\beta} \), where \( R_+ \) is a positive root subsystem and \( \alpha = 2\sum_{\beta \in R_+} k_\beta \);
  \item \emph{Radial weights:} \( w(x) = |x|^\alpha \). 
\end{itemize}

By Euler's homogeneous function theorem, every such homogeneous weight satisfies the differential identity
\[
 x \cdot \nabla w(x) = \alpha\, w(x),
\]
which encodes its degree of homogeneity in analytical form. 
We note that if $w$ is homogeneous of degree $\alpha \geq 0$ and satisfies \eqref{star}, then the curvature parameter satisfies \( K_w \leq 0 \). In particular, a homogeneous log-concave function $w$ must have $K_w=0$. Indeed, assume that $-\nabla^2 (\log w(x)) \ge K\, \mathbb{I}$ for some $K>0$. Then define $u(x)=\log w(x)$. Homogeneity of \(w\) then implies $u(tx)=\alpha\log t + u(x)$, so that, in particular, the Euler identity in logarithmic form is
\begin{equation*}
	x\cdot\nabla u(x)=\alpha.
\end{equation*}
Fix \(x\in\Sigma\) and consider the ray \(t\mapsto tx\) for \(t>0\). Define $	\varphi(t)=u(tx),~t>0$. From the functional identity for \(u\),
\begin{equation*}
	\varphi(t)=\alpha\log t + u(x).
\end{equation*}
Differentiate twice in \(t\):
\begin{equation*}
	\varphi'(t)=\frac{\alpha}{t},\qquad
	\varphi''(t)=-\frac{\alpha}{t^2}.
\end{equation*}
On the other hand, by the chain rule,
\begin{equation*}
	\varphi'(t)=x^T\nabla u(tx),\qquad
	\varphi''(t)=x^T \nabla^2 u(tx)x.
\end{equation*}
Thus
\begin{equation*}
	x^T \nabla^2 u(tx)x = -\frac{\alpha}{t^2}.
\end{equation*}
In particular, at \(t=1\),
\begin{equation*}
	x^T \nabla^2 u(x)x = -\alpha.
\end{equation*}
So the radial quadratic form is the constant \(-\alpha\), independent of the size of \(x\).
Now if
\begin{equation*}
	-\nabla^2(\log w)(x) = -\nabla^2 u(x)\ge K\, \mathbb{I}
\end{equation*}
for all \(x\in\Sigma\), then for every \(x\in\Sigma\),
\begin{equation*}
	x^T\bigl(-\nabla^2 u(x)\bigr)x\ge Kx^T \mathbb{I} x = K|x|^2,
\end{equation*}
or equivalently
\begin{equation*}
	x^T \nabla^2 u(x)x\le -K|x|^2.
\end{equation*}
But from the radial identity above,
\begin{equation*}
	x^T \nabla^2 u(x)x = -\alpha.
\end{equation*}
Combining the two relations yields
\begin{equation*}
	-\alpha\le -K|x|^2,
\end{equation*}
which is impossible if $K>0$, since \(|x|\) can be made arbitrarily large in \(\Sigma\). 

Finally, we call a weight \( w \) \emph{partial} if it is independent of at least one coordinate, i.e., \( \partial_{x_i} w \equiv 0 \) for some index \( i \).  

In this paper, we study Poincar\'e, logarithmic Sobolev, and Beckner inequalities for weighted Gaussian measures of the form
\begin{equation*}
    d\mu_w = \frac{w(x) e^{-|x|^2/2} \, dx}{\int_\Sigma w(x) e^{-|x|^2/2} \, dx},
\end{equation*}
along with their applications. We pursue three primary goals. First, we establish a generalized Beckner inequality on $(\Sigma, \mu_w)$. Second, we derive a Poincar\'e inequality on $(\Sigma, \mu_w)$, including its gradient stability, and apply them to establish stability estimates for the Heisenberg uncertainty principle under homogeneous weights. Third, we investigate a logarithmic Sobolev inequality on $(\Sigma, \mu_w)$, using it to obtain a Euclidean logarithmic Sobolev inequality with homogeneous weights and explicit constants. 

\medskip

Our first main goal of this paper is to establish a complete family of sharp Beckner-type inequalities for weighted Gaussian measures on conical domains $\Sigma$, extending the classical Gaussian results to weighted settings controlled by the curvature-dimension parameter $K_w>-1$. Originally proved by Beckner \cite{Bec89} via explicit Hermite expansions, the Beckner inequality provides a powerful interpolation between Poincar\'e and logarithmic Sobolev inequalities:
\begin{equation*}
\frac{1}{2-p} \left( \int_{\mr^n} f^2\,d\mu - \Bigl( \int_{\mr^n} |f|^p\,d\mu \Bigr)^{2/p} \right) 
\le C \int_{\mr^n} |\nabla f|^2\,d\mu, \quad 1\le p<2.
\end{equation*}
Here $d\mu$ denotes the classical Gaussian measure: 
$$d\mu = \frac{e^{-\frac{|x|^2}{2}}}{(2\pi)^{n/2}}dx.$$
As $p\to 1^+$, this recovers the Poincar\'e spectral gap; as $p\to 2^-$, it yields the log-Sobolev inequality via the expansion $|f|^p = e^{p\log|f|} \approx 1 + p\log|f|$. These inequalities have profound significance across analysis, probability, and geometry. For instance, they underpin hypercontractivity of Ornstein-Uhlenbeck semigroups \cite{GRO75b, Gross06, Nelson73}, enabling exponential concentration and moment bounds; control uncertainty principles in Fourier analysis \cite{Beckner75}; and yield sharp functional inequalities on manifolds with non-negative Ricci curvature \cite{CZ25, Ledoux00}. See, e.g., Bakry, Gentil, and Ledoux~\cite{BGL14} for their deep connections to the curvature-dimension condition.

Motivated by the results in \cite{Gwynne13}, we will use the semigroup approach to establish the following generalized Beckner inequality which is our first main result in this paper: 

\begin{theorem}\label{T1}
Let $1\le p<q$. Assume that $w$ satisfies \eqref{star}. Then for  $W^{1,q}_A(\Sigma,\mu_w)$, we have 
\begin{equation}
\frac{1}{q-p}\Biggl[ \Bigl( \int_\Sigma |f|^q\,d\mu_w \Bigr)^{2/q} - \Bigl( \int_\Sigma |f|^p\,d\mu_w \Bigr)^{2/p} \Biggr] 
\le \frac{1}{1+K_w} \Bigl( \int_\Sigma |\nabla f|^q\,d\mu_w \Bigr)^{2/q}.
\end{equation}

\end{theorem}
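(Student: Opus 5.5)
We follow the semigroup strategy of Gwynne for the Gaussian case, replacing the Ornstein--Uhlenbeck semigroup by the weighted one. Let
\[
Lf=\Delta f - x\cdot\nabla f+\nabla(\log w)\cdot\nabla f ,
\]
acting on functions on $\Sigma$. Its invariant symmetric measure is $\mu_w$; we impose Neumann-type boundary conditions on $\partial\Sigma$ (or note that $w$ vanishes there), so that $\int_\Sigma g\,Lf\,d\mu_w=-\int_\Sigma\nabla f\cdot\nabla g\,d\mu_w$. Let $P_t=e^{tL}$. The Bakry--\'Emery computation gives
\[
\Gamma_2(f)=\|\nabla^2 f\|^2+|\nabla f|^2+\langle -\nabla^2\log w\,\nabla f,\nabla f\rangle\ge (1+K_w)|\nabla f|^2 ,
\]
so $CD(1+K_w,\infty)$ holds by \eqref{star}. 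The standard argument then yields the pointwise commutation $|\nabla P_t f|\le e^{-(1+K_w)t}P_t|\nabla f|$. Here convexity of $\Sigma$ is used to justify the maximum-principle and interpolation step $\Lambda(s)=P_s(|\nabla P_{t-s}f|)e^{-\rho s}$. By Jensen this gives $|\nabla P_tf|^r\le e^{-r(1+K_w)t}P_t|\nabla f|^r$ for all $r\ge1$.

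\textbf{Step 1: the exponent-varying semigroup.} Set $\rho=1+K_w>0$. Nelson's hypercontractivity, which is equivalent to the log-Sobolev inequality under $CD(\rho,\infty)$, gives $\|P_tf\|_{q(t)}\le\|f\|_{p}$ for $q(t)=1+(p-1)e^{2\rho t}$. Instead of using this directly, we follow Gwynne. Choose $t$ with $e^{-2\rho t}=(p-1)/(q-1)$ when $p>1$; the case $p=1$ is handled by a limit. Then write
\[
\|f\|_q^2-\|f\|_p^2 \le \|f\|_q^2-\|P_tf\|_q^2=-\int_0^t\frac{d}{ds}\|P_sf\|_q^2\,ds .
\]
Differentiating and integrating by parts gives
\[
-\frac{d}{ds}\|P_sf\|_q^2=2(q-1)\|P_sf\|_q^{2-q}\int |P_sf|^{q-2}|\nabla P_sf|^2\,d\mu_w .
\]
Apply H\"older with exponents $q/(q-2)$ and $q/2$ when $q\ge2$; when $q<2$ use the reverse arrangement, as in Gwynne. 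This bounds the integral by $\|P_sf\|_q^{q-2}\|\nabla P_sf\|_q^2$. The commutation bound then gives $\|\nabla P_sf\|_q\le e^{-\rho s}\|\nabla f\|_q$. Integrating in $s$ yields
\[
\|f\|_q^2-\|P_tf\|_q^2\le \frac{q-1}{\rho}\bigl(1-e^{-2\rho t}\bigr)\|\nabla f\|_q^2=\frac{q-p}{\rho}\|\nabla f\|_q^2 ,
\]
using the choice of $t$. This is exactly the claimed constant $1/(1+K_w)$.

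\textbf{Step 2: closing the argument.} It remains to compare $\|P_tf\|_q$ with $\|f\|_p$ in the right direction: we need $\|P_tf\|_q\le\|f\|_p$, which is precisely Nelson's hypercontractivity with $q=q(t)$. This in turn follows from the weighted log-Sobolev inequality with constant $1/\rho$, obtained by the same $\Gamma_2$ argument. Combining the two gives the stated inequality. For $p=1$ we let $p\downarrow1$, using continuity of both sides. Integrability of $f$ in $W^{1,q}_A$ together with density of smooth compactly supported functions (up to $\partial\Sigma$) justifies all differentiations.

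\textbf{Main obstacle.} The delicate step is the range $q<2$, where the H\"older step above fails in the naive direction. There we will follow Gwynne's treatment: work with $g=|f|^{q/2}$-type substitutions, or run the interpolation in the reverse direction from $\|f\|_p$ up along $q(s)$. A second technical point is the rigorous gradient commutation on the cone $\Sigma$ with boundary. For that we plan to approximate $w$ by smooth positive weights $w_\varepsilon$ on $\mathbb{R}^n$ with the same curvature lower bound, using the convexity of $\Sigma$, and then pass to the limit.
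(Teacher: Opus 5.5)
\textbf{The gap is the range $1<q<2$, and it cannot be filled.} You do not prove this case; you defer it to ``Gwynne's treatment''. There the H\"older step of Step 1 reverses: H\"older with exponents $2/q$ and $2/(2-q)$ gives $\int |g|^{q-2}|\nabla g|^2\,d\mu_w \ge \|g\|_q^{q-2}\|\nabla g\|_q^2$. More seriously, no replacement argument can exist, because the inequality is false for $q$ close to $1$.

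Take $n=1$, $w\equiv 1$, $\Sigma=\mathbb{R}$, so $K_w=0$ and $\mu_w=\gamma$ is the standard Gaussian. Take $p=1$, with norms in $L^q(\gamma)$. For a fixed smooth bounded $f\ge 0$ with bounded derivative, letting $q\downarrow 1$ in the claimed inequality gives
\[
2\|f\|_1\,\mathrm{Ent}_\gamma(f)\le \|f'\|_1^2 ,
\]
since the left side of the theorem tends to the right derivative of $q\mapsto\|f\|_q^2$ at $q=1$. Now take $f(x)=\Phi(x/\sigma)$, with $\Phi$ the standard normal distribution function. Then $\|f\|_1=\tfrac12$ and $\|f'\|_1=(2\pi(1+\sigma^2))^{-1/2}$. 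Also $\mathrm{Ent}_\gamma(f)\to\tfrac12\log 2$ as $\sigma\to 0$. So the left side tends to $\tfrac12\log 2\approx 0.35$, while the right side tends to $1/(2\pi)\approx 0.16$. Hence, for a small fixed $\sigma$ and all $q$ sufficiently close to $1$, the inequality with $p=1$ fails.

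The paper's proof carries the same hidden restriction: its H\"older step with exponents $q/(q-2)$ and $q/2$ needs $q\ge 2$, and Theorem \ref{T2.1} is stated only for $q\ge 2$. The statement therefore needs a restriction such as $q\ge 2$, which is what both your argument and the paper's actually establish. Your plan for $q<2$ should be dropped, not completed.

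\textbf{For $q\ge 2$ your argument is correct and takes a genuinely different route from the paper's.}

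The paper's route: it studies $\phi(t)=\bigl(\int_\Sigma (P_tf^p)^{q/p}\,d\mu_w\bigr)^{2/q}$ on $[0,\infty)$, using ergodicity to identify $\phi(\infty)=\|f\|_p^2$. It applies the gradient commutation bound to $f^p$, then the pointwise kernel H\"older inequality $P_t(f^{p-1}|\nabla f|)^2\le (P_tf^p)^{2-2/p}(P_t|\nabla f|^p)^{2/p}$, and finally H\"older and Jensen.

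Your route: you evolve $f$ itself in $L^q$, but only up to the hypercontractive time $e^{2\rho t}=(q-1)/(p-1)$, and close with Nelson's bound $\|P_tf\|_q\le\|f\|_p$. This is the classical derivation of Beckner's inequality from hypercontractivity, extended from $q=2$ to all $q\ge 2$. For $p=1$ the two proofs essentially coincide, since your $t=\infty$ and the kernel H\"older step becomes trivial.

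What each buys: your route avoids the $f^p$ device. However, it imports the log-Sobolev inequality, via Gross's theorem, as an extra input. The paper needs only the commutation bound and ergodicity, and it obtains the log-Sobolev inequality of Theorem \ref{T4} afterwards as a consequence. There is no circularity in your version, because you derive the $q=2$ log-Sobolev inequality directly from $CD(1+K_w,\infty)$.
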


Here we denote the Sobolev space $W^{1,q}_A(\Sigma,\mu_w)$ as the completion with respect to the norm
\[
\left( \int_\Sigma |f|^q \, d\mu_w \right)^{1/q} + \left( \int_\Sigma |\nabla f|^q \, d\mu_w \right)^{1/q}
\]
of $A := \left\{u \in C_0^\infty(\overline{\Sigma}) \ \big | \  \nabla u \cdot \eta = 0~\text{on}~ \partial\Sigma\right\}$, where $\eta$ is the outer normal vector of $\Sigma$.

In particular, when $q=2$, we obtain the following Beckner-type inequality for weighted Gaussian measures:

\begin{theorem}\label{T2}
Assume that $w$ satisfies \eqref{star}. For $1\le p<2$ and $f\in W^{1,2}_A(\Sigma,\mu_w)$ 
\begin{equation*}
\frac{1}{2-p} \left( \int_\Sigma |f|^2\,d\mu_w - \Bigl( \int_\Sigma |f|^p\,d\mu_w \Bigr)^{2/p} \right) 
\leq \frac{1}{1+K_w} \int_\Sigma |\nabla f|^2\,d\mu_w.
\end{equation*}
The constant $\frac{1}{1+K_w}$ is sharp  when $w$ is partial.
\end{theorem}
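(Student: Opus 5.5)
The inequality itself is the case $q=2$ of Theorem~\ref{T1}. With $q=2$ the left side of Theorem~\ref{T1} becomes $\frac{1}{2-p}\bigl(\int_\Sigma f^2\,d\mu_w-(\int_\Sigma|f|^p\,d\mu_w)^{2/p}\bigr)$, and the right side is exactly $\frac{1}{1+K_w}\int_\Sigma|\nabla f|^2\,d\mu_w$. The extension from $A$ to $W^{1,2}_A(\Sigma,\mu_w)$ is by density: both sides are continuous in the $W^{1,2}$ norm, and the $L^p$ term with $p<2$ is controlled by the $L^2$ norm since $\mu_w$ is a probability measure. The endpoint $p=1$ is included, since Theorem~\ref{T1} allows $p=1$. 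So the real content is the sharpness claim for partial $w$.

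For sharpness, assume $\partial_{x_i}w\equiv 0$. Then $\mu_w$ factors as a standard one-dimensional Gaussian in $x_i$ times a measure in the remaining variables. Because $w$ does not depend on $x_i$, the domain $\Sigma$ is invariant under translation in $x_i$, so $\partial\Sigma$ has normal with $\eta_i=0$.

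The first observation is that partiality forces $K_w\le 0$. Indeed $\nabla^2\log w$ has a zero row and column in the $i$-th direction, so $e_i^T(-\nabla^2\log w)e_i=0\ge K_w$. Condition \eqref{star} then gives $-1<K_w\le 0$.

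To show the constant cannot be improved, I would test with $f_\varepsilon=1+\varepsilon g$, taking $g=x_i$ when $K_w=0$. Expanding to second order in $\varepsilon$:
\[
\int f_\varepsilon^2\,d\mu_w=1+2\varepsilon\int g\,d\mu_w+\varepsilon^2\int g^2\,d\mu_w,
\qquad
\Bigl(\int|f_\varepsilon|^p\,d\mu_w\Bigr)^{2/p}=1+2\varepsilon\int g\,d\mu_w+\varepsilon^2\Bigl[(p-1)\int g^2\,d\mu_w+(2-p)\Bigl(\int g\,d\mu_w\Bigr)^2\Bigr]+o(\varepsilon^2).
\]
Hence the left side equals $\varepsilon^2\operatorname{Var}_{\mu_w}(g)+o(\varepsilon^2)$. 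For $g=x_i$ we have $\operatorname{Var}(x_i)=1$ and $\int|\nabla g|^2\,d\mu_w=1$, so the ratio tends to $1=\frac{1}{1+K_w}$. Also $\partial_\eta x_i=\eta_i=0$ on $\partial\Sigma$, so after a cutoff approximation $f_\varepsilon$ is admissible.

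The main obstacle is the case $K_w<0$. There the test direction $g=x_i$ only gives ratio $1$, which is smaller than $\frac{1}{1+K_w}$. One needs instead a Poincaré extremal in the directions where $-\nabla^2\log w$ attains $K_w$, for example a linear function in a direction $e$ with $e^T(-\nabla^2\log w)e=K_w$ when $\log w$ is quadratic there. I suspect the intended claim is really for partial $w$ with $K_w=0$, which is the case in the homogeneous log-concave examples. I would therefore present the sharpness argument via $g=x_i$, noting that it establishes sharpness exactly when $K_w=0$.
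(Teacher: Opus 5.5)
Your proof of the inequality, and of sharpness when $K_w=0$, is essentially the paper's. The paper gets the inequality by quoting the Bakry--Gentil--Ledoux Beckner inequality under $CD(1+K_w,\infty)$, whereas you set $q=2$ in Theorem~\ref{T1}. This is the same semigroup argument. At $q=2$ the H\"older step with exponent $q/(q-2)$ in that proof is vacuous, so relying on it is safe. For sharpness, both you and the paper linearize $1+\varepsilon g$ into the Poincar\'e quotient and use the extremal $g=x_i$ from the proof of Theorem~\ref{T3}. Your second-order expansion is correct, and it does not even need $\int g\,d\mu_w=0$.

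You diverge from the paper only when $K_w<0$, and there you are right and the paper is not. Its proof simply asserts that a partial $w$ has $K_w=0$. Partiality only gives $e_i^T(-\nabla^2\log w)e_i=0$, i.e.\ $K_w\le 0$.

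Better test functions cannot fix this, because the sharpness clause is false in general. Take $n=2$ and $w(x_1,x_2)=e^{\psi(x_1)}$ with $\psi''(t)=\frac{1}{2(1+t^2)}$. Then:
\begin{itemize}
\item $w$ is partial and $K_w=-\frac12$;
\item $\mu_w=\nu\otimes\gamma_1$, where $\gamma_1$ is the standard Gaussian in $x_2$ and $d\nu\propto e^{-V(t)}dt$;
\item $V''=1-\psi''>\frac12$ for all $t\neq0$.
\end{itemize}
The spectrum of $\nu$ is discrete, so take a first eigenfunction $u$. The Bochner identity gives
\[
\lambda_1^2\int u^2\,d\nu=\int (u'')^2\,d\nu+\int V''(u')^2\,d\nu>\tfrac12\int (u')^2\,d\nu=\tfrac12\,\lambda_1\int u^2\,d\nu .
\]
Hence $\lambda_1>\frac12$, and the Poincar\'e constant of $\mu_w$ is $\max(1/\lambda_1,1)<2$. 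Since $(\int|f|\,d\mu_w)^2\ge(\int f\,d\mu_w)^2$, the $p=1$ case of the Beckner inequality holds with this Poincar\'e constant. So $\frac{1}{1+K_w}=2$ is not sharp.

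By contrast, $w=e^{x_1^2/4}$ also has $K_w=-\frac12$, and there $x_1$ attains the constant $2$, as you anticipated for quadratic $\log w$.

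So state the sharpness part under the hypothesis $K_w=0$. This holds automatically when $w$ is partial and either log-concave or homogeneous. In the homogeneous case, $-\nabla^2\log w$ is homogeneous of degree $-2$, so \eqref{star} forces $-\nabla^2\log w\ge 0$. In your last paragraph, present this counterexample instead of searching for extremals in the directions where $K_w$ is attained.
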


It is known that one can deduce the Poincar\'e inequality from the Beckner inequality. Therefore, as a consequence of our main result (Theorem \ref{T1}), by Taylor expansion, we obtain the following Poincar\'e inequality for weighted Gaussian measures:

\begin{theorem}\label{T2.1}
Assume that $w$ satisfies \eqref{star}. For $ q\geq 2$ and $f\in W^{1,q}_A(\Sigma,\mu_w)$, we have 
    $$\int_\Sigma\left\vert f-\int_\Sigma
        fd\mu_{w}\right\vert ^{2}d\mu_{w} \leq \frac{1}{1+K_w}\left(\int_\Sigma |\nabla f|^q\,d\mu_w\right)^{\frac{2}{q}}.$$
\end{theorem}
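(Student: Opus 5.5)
Our plan is to derive Theorem \ref{T2.1} from Theorem \ref{T1} by applying it to a perturbation of the constant function and letting $p \to q$, with $q \ge 2$ fixed. Because the inequality is invariant under adding constants to $f$ on the left (the variance) but the right side involves $\nabla f$ only, we may assume by density that $f\in A$, so that $f$ is bounded with bounded gradient. We then apply Theorem \ref{T1} to $g_\varepsilon = 1+\varepsilon f$ with $p=1$ and exponent $q$ replaced by $2$? That route would only give the $q=2$ gradient term. Instead we keep the exponent $q$ on the gradient and use the pair $(p,q)=(p,q)$ with $p$ close to... Simpler choice: take the pair of exponents $(1,2)$ is not available with a $q$-norm on the right. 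So we use Theorem \ref{T1} with exponents $p<q$ replaced by $p=2-\delta$? We commit to the following: apply Theorem \ref{T1} to $g_\varepsilon=1+\varepsilon f$ with the given $q\ge2$ and with $p=q-\delta$ replaced by $p\in[1,q)$ arbitrary, and Taylor-expand both $L^r$ norms in $\varepsilon$.

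The key computation: for bounded $f$ and $r\ge1$, as $\varepsilon\to0$, with $m=\int f\,d\mu_w$,
\[
\Bigl(\int_\Sigma |1+\varepsilon f|^r d\mu_w\Bigr)^{2/r} = 1+2\varepsilon m + \varepsilon^2\Bigl[(r-1)\int_\Sigma f^2 d\mu_w - (r-2)m^2\Bigr] + O(\varepsilon^3),
\]
obtained from $\int(1+\varepsilon f)^r = 1+r\varepsilon m+\tfrac{r(r-1)}{2}\varepsilon^2\int f^2+O(\varepsilon^3)$ and $\log$-expansion of the power $2/r$. Subtracting the expansions for $r=q$ and $r=p$, the zeroth and first order terms cancel and the $\varepsilon^2$ coefficient is $(q-p)\bigl(\int f^2 - m^2\bigr)$, i.e. $(q-p)\,\mathrm{Var}_{\mu_w}(f)$. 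Dividing by $q-p$, the left side of Theorem \ref{T1} equals $\varepsilon^2\int|f-m|^2d\mu_w+O(\varepsilon^3)$, while the right side is exactly $\varepsilon^2(1+K_w)^{-1}(\int|\nabla f|^q d\mu_w)^{2/q}$ since $\nabla g_\varepsilon=\varepsilon\nabla f$. Dividing by $\varepsilon^2$ and letting $\varepsilon\to0$ gives the claim for $f\in A$; any $p\in[1,q)$ works (e.g. $p=1$), and note $1+\varepsilon f>0$ for small $\varepsilon$, so absolute values cause no trouble.

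Finally we extend to $W^{1,q}_A(\Sigma,\mu_w)$ by density: since $\mu_w$ is a probability measure and $q\ge2$, convergence in $W^{1,q}$ implies $L^2$ convergence of $f_k$ and of their means, so both sides pass to the limit. The main point requiring care is the uniformity of the $O(\varepsilon^3)$ remainder, which is immediate for bounded $f$ via Taylor's theorem with $|\varepsilon f|\le 1/2$; this is why we first reduce to $f\in A$ (compactly supported smooth, hence bounded). Also $g_\varepsilon\in W^{1,q}_A$ requires constants to lie in the space, which holds since $\mu_w$ is finite and constants satisfy the Neumann condition (approximate by cutoffs if needed).
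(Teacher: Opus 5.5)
Your argument is correct and is essentially the paper's proof. You apply Theorem~\ref{T1} to $1+\varepsilon f$ and expand both $L^r$ norms to second order, so that the difference is $(q-p)\varepsilon^2\,\mathrm{Var}_{\mu_w}(f)+O(\varepsilon^3)$; then you divide by $\varepsilon^2$ and let $\varepsilon\to0$. The only differences are small improvements: you carry the mean through the expansion (the $(2-r)m^2$ terms cancel) instead of normalizing $\int_\Sigma f\,d\mu_w=0$, and you supply the uniform remainder bound and the density extension that the paper leaves implicit. You should, however, delete the false starts in your first paragraph.
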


Our next primary goal is to establish  some sharp stability estimates for the Poincar\'e inequality on the weighted Gaussian measure $\mu_w$, along with applications. We will prove that

\begin{theorem}
\label{T3}
    Assume that $w$ satisfies \eqref{star}. For all $f \in W^{1,2}_A(\Sigma,\mu_w)$, we have
    \begin{equation}
        \int_\Sigma\left\vert \nabla f\right\vert ^{2}d\mu_{w}
        \geq \left(1+K_w\right)\int_\Sigma\left\vert f-\int_\Sigma
        fd\mu_{w}\right\vert ^{2}d\mu_{w}. \label{PGH}%
    \end{equation}
    If $w$ is partial, (\ref{PGH}) can be attained by  non-constant functions. 
    
    Moreover, \begin{align}
    &\int_\Sigma|\nabla f|^2\,d\mu_w - \left(1+K_w\right)\int_\Sigma \Bigl| f - \int_\Sigma f\,d\mu_w \Bigr|^2 d\mu_w \nonumber\\
    &\geq \frac{1}{2} \int_\Sigma \Biggl| \nabla \Biggl[ f - \left(1+K_w\right)\int_\Sigma \Bigl( f - \int_\Sigma f\,d\mu_w \Bigr) x\,d\mu_w \cdot x \Biggr] \Biggr|^2 d\mu_w. \label{IPH}
\end{align}
Also, if $w$ is partial, equality in \eqref{IPH} is attained by non-affine functions.
\end{theorem}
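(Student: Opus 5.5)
The inequality \eqref{PGH} is the case $q=2$ of Theorem \ref{T2.1}, which rests on Theorem \ref{T1}. For sharpness, suppose $w$ is partial, say $\partial_{x_i} w\equiv 0$. I would take $f=x_i$. The measure $\mu_w$ factors in the $x_i$ direction as a standard one-dimensional Gaussian times the rest, so $\int_\Sigma x_i\,d\mu_w=0$, $\int_\Sigma x_i^2\,d\mu_w=1$ and $|\nabla f|^2=1$. Moreover $-\nabla^2\log w$ has $e_i$ in its kernel, so $K_w\le 0$. Hence $f=x_i$ gives equality exactly when $K_w=0$, which is the log-concave case. In writing this up I will have to be careful about what sharpness means when $K_w<0$; I expect the intended situation is $K_w=0$.

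For \eqref{IPH} I use a duality argument based on the generator
\[
L=\Delta-x\cdot\nabla+\nabla\log w\cdot\nabla .
\]
With the Neumann condition built into the class $A$, this operator is symmetric on $L^2(\mu_w)$, and $\int_\Sigma (-Lu)v\,d\mu_w=\int_\Sigma \nabla u\cdot\nabla v\,d\mu_w$. Its $\Gamma_2$ operator is
\[
\Gamma_2(u)=|\nabla^2u|^2+\bigl(\mathbb I-\nabla^2\log w\bigr)(\nabla u,\nabla u)\ge |\nabla^2 u|^2+(1+K_w)|\nabla u|^2 .
\]
Set $g=f-\int_\Sigma f\,d\mu_w$ and $\kappa=1+K_w$. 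Since $g$ has mean zero and the spectral gap \eqref{PGH} holds, I can solve $-Lu=g$.

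Integrating the Bochner formula gives
\[
\int_\Sigma g^2\,d\mu_w=\int_\Sigma (Lu)^2\,d\mu_w\ge \int_\Sigma|\nabla^2u|^2\,d\mu_w+\kappa\int_\Sigma|\nabla u|^2\,d\mu_w .
\]
Also $\int_\Sigma\nabla g\cdot\nabla u\,d\mu_w=\int_\Sigma g^2\,d\mu_w$. Expanding the square and using these two facts yields
\[
\int_\Sigma|\nabla(g-\kappa u)|^2\,d\mu_w\le \int_\Sigma|\nabla f|^2\,d\mu_w-\kappa\int_\Sigma g^2\,d\mu_w-\kappa\int_\Sigma|\nabla^2u|^2\,d\mu_w .
\]

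Next, put $a=\int_\Sigma g\,x\,d\mu_w$. Testing $-Lu=g$ against $x_j$ gives $a_j=\int_\Sigma\partial_j u\,d\mu_w$, so $a$ is the mean of $\nabla u$. Applying \eqref{PGH} to each $\partial_j u$ gives
\[
\kappa^2\int_\Sigma|\nabla u-a|^2\,d\mu_w\le \kappa\int_\Sigma|\nabla^2u|^2\,d\mu_w .
\]
Now write $\nabla(g-\kappa a\cdot x)=\nabla(g-\kappa u)+\kappa(\nabla u-a)$ and use $|X+Y|^2\le 2|X|^2+2|Y|^2$. Adding the two bounds, the Hessian terms cancel, and I obtain twice the deficit on the right. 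This is \eqref{IPH}, since $\nabla f=\nabla g$.

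For the equality case with $w$ partial and $K_w=0$, I would track when every inequality above is an equality. This requires equality in Bochner, $\nabla(g-\kappa u)=\kappa(\nabla u-a)$, and equality in Poincaré for $\partial_j u$. That suggests taking $u$ as a function of $x_i$ alone, with $\partial_i u$ affine, for example $u=x_i^2/2$ plus an $x_i$ term. Then $g=-Lu$ is a second Hermite polynomial in $x_i$, which is non-affine, and I would check equality directly.

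The main obstacle I expect is the rigor of the duality step. I need to solve $-Lu=g$ in a class where $\nabla u\cdot\eta=0$ on $\partial\Sigma$, and to justify the integrated Bochner identity with no boundary contribution. The boundary term involves the second fundamental form of $\partial\Sigma$, and it has the right sign because $\Sigma$ is convex. I would try approximation by functions in $A$ together with the convexity of $\Sigma$.
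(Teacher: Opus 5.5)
Your proposal is correct and follows the paper's own route for \eqref{IPH}. You solve $-\mathbf{L}_w u=g$, integrate the Bochner formula, apply the Poincar\'e inequality to each $\partial_j u$ (whose means are the components of $\int_\Sigma g\,x\,d\mu_w$), and combine via $|X+Y|^2\le 2|X|^2+2|Y|^2$; this is exactly the paper's argument with its parameter set to $c=1+K_w$, and your caveats are more careful than the paper on three points: partiality only gives $K_w\le 0$, so the attainability claims need $K_w=0$ (the paper's assertion that partial forces $K_w=0$ tacitly assumes log-concavity); the integrated Bochner identity is really an inequality, with a boundary term of favorable sign when $\Sigma$ is convex; and your example $f=x_i^2$ (from $u=x_i^2/2$) does give equality in \eqref{IPH} when $K_w=0$, a case the paper asserts but never verifies.
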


We can prove \eqref{PGH} in Theorem \ref{T3} by using the $\Gamma$-calculus framework of Bakry and \'Emery \cite{BE85}. Here, we will provide an alternative proof via the duality-based $L^2$ H\"ormander method (see \cite{Bonn22, FIL16}, for instance). A refined analysis of the this approach yields improvements to the Poincar\'e inequality~\eqref{T1}, including its gradient stability estimate. As a consequence, we also obtain the following $L^2$ stability of the Poincar\'e inequality. 

\begin{corollary}
Assume that $w$ satisfies \eqref{star}. For $f\in W^{1,2}_A(\Sigma,\mu_w)$, we have
\begin{align}      
    &\int_\Sigma |\nabla f|^2\,d\mu_w - \left(1+K_w\right)\int_\Sigma \left\vert f -\int_\Sigma f\,d\mu_w\right\vert^2 d\mu_w \nonumber\\
    &\geq \frac{1+K_w}{2} \int_\Sigma \Bigg| f - \int_\Sigma f\,d\mu_w - \left(1+K_w\right)\int_\Sigma fx\,d\mu_w\cdot x + (1+K_w) \left(\int_\Sigma f\,d\mu_w\right)\left(\int_\Sigma x\,d\mu_w\right)\cdot x \nonumber \\ 
    &- (1+K_w)\left(\int_\Sigma f\,d\mu_w\right) \left|\int_\Sigma x\,d\mu_w\right|^2+ \left(1+K_w\right)\Bigl( \int_\Sigma fx\,d\mu_w \Bigr) \cdot \Bigl( \int_\Sigma x\,d\mu_w \Bigr) \Bigg|^2 d\mu_w. \label{C0}
\end{align}

\end{corollary}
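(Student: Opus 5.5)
The plan is to obtain the corollary by applying the Poincar\'e inequality \eqref{PGH} to the function inside the gradient on the right-hand side of \eqref{IPH}, and then computing its mean explicitly. Set
\[
c:=\int_\Sigma f\,d\mu_w,\qquad b:=\int_\Sigma x\,d\mu_w,\qquad v:=(1+K_w)\int_\Sigma (f-c)\,x\,d\mu_w = (1+K_w)\Bigl(\int_\Sigma fx\,d\mu_w - c\,b\Bigr),
\]
and $g(x):=f(x)-v\cdot x$. Then \eqref{IPH} reads
\[
\int_\Sigma|\nabla f|^2\,d\mu_w-(1+K_w)\int_\Sigma|f-c|^2\,d\mu_w\ \ge\ \frac12\int_\Sigma|\nabla g|^2\,d\mu_w .
\]

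First I check that $g$ is an admissible test function for \eqref{PGH}. It is $f$ minus a linear function, and linear functions have finite $W^{1,2}$ norm with respect to the Gaussian-type measure $\mu_w$. The Neumann condition $\nabla(v\cdot x)\cdot\eta=v\cdot\eta$ need not vanish, so I would not rely on membership in $A$ directly. Instead I note that the proof of \eqref{PGH} (via the $L^2$ H\"ormander duality) only needs $g\in W^{1,2}(\Sigma,\mu_w)$. Alternatively, one can observe that for the inequality to be used it suffices that \eqref{PGH} holds for $f$ and for coordinate functions, which one gets by approximation. This admissibility point is the only real subtlety, and I would handle it by the approximation/density remark.

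Next, applying \eqref{PGH} to $g$ gives
\[
\frac12\int_\Sigma|\nabla g|^2\,d\mu_w\ \ge\ \frac{1+K_w}{2}\int_\Sigma\Bigl|g-\int_\Sigma g\,d\mu_w\Bigr|^2 d\mu_w .
\]
It remains to expand $g-\int_\Sigma g\,d\mu_w=f-c-v\cdot x+v\cdot b$ using the formula for $v$:
\[
-v\cdot x=-(1+K_w)\int_\Sigma fx\,d\mu_w\cdot x+(1+K_w)\,c\,b\cdot x,\qquad
v\cdot b=(1+K_w)\Bigl(\int_\Sigma fx\,d\mu_w\Bigr)\cdot b-(1+K_w)\,c\,|b|^2 .
\]
Substituting these two expressions reproduces exactly the integrand in \eqref{C0}, which completes the argument.
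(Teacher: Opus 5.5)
Your argument is correct and is the route the paper intends, since it presents the corollary simply as a consequence of Theorem~\ref{T3}: apply \eqref{PGH} to $g=f-(1+K_w)\int_\Sigma\bigl(f-\int_\Sigma f\,d\mu_w\bigr)x\,d\mu_w\cdot x$ from \eqref{IPH} and expand $g-\int_\Sigma g\,d\mu_w$, and your expansion matches the stated integrand term by term. Drop the ``alternatively'' remark, because \eqref{PGH} is quadratic and so does not pass from $f$ and the coordinate functions to their linear combinations; your first justification suffices, namely that in the duality proof only the Poisson solution $u$ needs the Neumann condition (and that condition is not preserved under $W^{1,2}$-limits anyway).
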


The above results provide a gradient stability refinement \eqref{IPH} and a $L^2$-stability version \eqref{C0} of the Poincar\'e inequality \eqref{T1}. In the unweighted case $w=1$, they recover the classical Gaussian Poincar\'e inequality with its optimal stability estimates.

The stability of geometric and functional inequalities was first explored by Brezis and Lieb in \cite{BL85}, who questioned whether the deficit in Sobolev inequalities bounds the distance to extremizers. Bianchi and Egnell affirmatively resolved this in \cite{BE91} by exploiting the structure of $W^{1,2}(\mathbb{R}^{n})$, proving
\[
\int_{\mathbb{R}^{n}}|\nabla f|^2\,dx - S_{n}\left( \int_{\mathbb{R}^{n}}|f|^{\frac{2n}{n-2}}\,dx \right)^{\frac{n-2}{n}} \geq c_{BE} \inf_{F \in E_{\mathrm{Sob}}} \int_{\mathbb{R}^{n}} |\nabla (f - F)|^2 \, dx
\]
for some $c_{BE} > 0$, where $S_n$ is the optimal Sobolev constant and $E_{\mathrm{Sob}}$ the optimizer manifold. This initiated extensive research on quantitative stability; see, e.g., \cite{BWW03,BDNN20,CF13,CFW13,CLT23,CLTW2025,CFMP09,FJ1,FJ2,FN19,FZ22,LW99} and references therein for Sobolev-type inequalities.

Stability constants and their attainability have often been overlooked. Recently, Dolbeault, Esteban, Figalli, Frank, and Loss in \cite{DEFFL} provided asymptotically optimal lower bounds for $c_{BE}$ as $n \to \infty$ using a gradient flow to globalize local Sobolev stability, with applications to Gaussian log-Sobolev. K\"onig showed in \cite{Kon23} that the optimal lower bound is strictly below the spectral gap $\frac{4}{n+4}$ and proved attainability in \cite{Kon22}. Chen, Tang, and the second author obtained explicit lower bounds for Hardy-Littlewood-Sobolev stability in \cite{CLT24}, extending to higher/fractional Sobolev, with asymptotic optimality as $n \to \infty$ ($0 < s < n/2$) or $s \to 0$ in \cite{CLT242,CLT243}; the latter yields global sphere log-Sobolev stability, refining the work of \cite{CLT23} and Beckner \cite{Beckner93}. Most recently, \cite{CLTW2025} establishes optimal Sobolev stability on the Heisenberg group via CR Yamabe flow, bypassing absent P\'olya-Szeg\"{o} and Riesz rearrangement inequalities.

In recent works such as \cite{CFLL24,LLR25}, classical Gaussian Poincar\'e inequalities and their stability versions have been used to prove sharp stability results for the classical Heisenberg uncertainty principle (HUP) and its refinements. The HUP originates in quantum mechanics and admits the following mathematical formulation: for $f \in S_0$,
\begin{equation}
\left(  \int_{\mathbb{R}^{n}}\left\vert \nabla f\right\vert ^{2}dx\right)
\left(  \int_{\mathbb{R}^{n}}|f|^{2}|x|^{2}dx\right)  \geq\frac{n^{2}}
{4}\left(  \int_{\mathbb{R}^{n}}|f|^{2}dx\right)  ^{2}. \label{HUP}%
\end{equation}
Here, $S_{0}$ is the completion of $C_{0}^{\infty}\left(  \mathbb{R}
^{n}\right)  $ under the norm $\left(  \int_{\mathbb{R}^{n}}\left\vert \nabla
f\right\vert ^{2}dx\right)  ^{\frac{1}{2}}+\left(  \int_{\mathbb{R}^{n}
}|f|^{2}|x|^{2}dx\right)^{\frac{1}{2}}$. In \cite{CFLL24}, the authors combined the Gaussian Poincar\'e inequality with the HUP identity: for $f\in S_{0}$, $f\neq0$ and
$\lambda=\left(  \frac{\int_{\mathbb{R}^{n}}|f|^{2}|x|^{2}dx}{\int
_{\mathbb{R}^{n}}|\nabla f|^{2}dx}\right)  ^{\frac{1}{4}}$, one has
\begin{equation}
\left(  \int_{\mathbb{R}^{n}}\left\vert \nabla f\right\vert ^{2}dx\right)
^{\frac{1}{2}}\left(  \int_{\mathbb{R}^{n}}|f|^{2}|x|^{2}dx\right)  ^{\frac
{1}{2}}-\frac{n}{2}\int_{\mathbb{R}^{n}}|f|^{2}dx=\frac{\lambda^{2}}{2}
\int_{\mathbb{R}^{n}}\left\vert \nabla\left(  fe^{\frac{|x|^{2}}{2\lambda^{2}
}}\right)  \right\vert ^{2}e^{-\frac{|x|^{2}}{\lambda^{2}}}dx\label{HUPI}%
\end{equation}
to derive sharp HUP stability that sharpens prior results in the literature \cite{F21,MV21}. See also \cite{DLL26, DN26} for the second order case.

Inspired by \cite{CFLL24}, we leverage our Poincar\'e inequalities and their improvements to study sharp HUP stability under homogeneous weights. Specifically, in Section 3, we establish the weighted HUP:
\begin{equation}\label{HUPH}
\left(\int_\Sigma |\nabla f|^2 w(x)dx\right)^{\frac{1}{2}}\left(\int_\Sigma |f|^{2}| x|^2 w(x)dx\right)^{\frac{1}{2}} \geq\frac{n+\alpha}{2} \int_\Sigma |f|^{2}w(x)dx, 
\end{equation}
where the constant $\frac{n + \alpha}{2}$ is optimal and the optimizers are $E_{HUPW} = \left\{ce^{-\beta |x|^2} : c\in \mr , \beta > 0\right\}$. Using our Poincar\'e inequalities (Theorem \ref{T3}), we then prove sharp stability versions, including the following: 
\begin{theorem}
    \label{S1}Assume that $w$ satisfies \eqref{star} and is homogeneous of degree $\alpha \geq 0$. We have
    \begin{equation}
        \delta_w(f) \geq (1+K_w) d^2_w \left(f, E_{HUPW}\right).
    \end{equation}
    Moreover, if $w$ is partial, the equality is achieved by non-trivial functions $u\notin E_{HUPW}$.
\end{theorem}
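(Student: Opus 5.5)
The plan is to combine a weighted analogue of the Heisenberg identity \eqref{HUPI} with the Poincar\'e inequality \eqref{PGH} of Theorem \ref{T3}, after a rescaling that turns the Gaussian weight into the measure $\mu_w$. I take $\delta_w(f)$ to be the deficit in \eqref{HUPH},
\[
\delta_w(f)=\Big(\int_\Sigma |\nabla f|^2 w\,dx\Big)^{1/2}\Big(\int_\Sigma |f|^2|x|^2 w\,dx\Big)^{1/2}-\frac{n+\alpha}{2}\int_\Sigma |f|^2 w\,dx .
\]
I take $d_w^2(f,E_{HUPW})=\inf_{c,\beta}\int_\Sigma |f-ce^{-\beta|x|^2}|^2w\,dx$, consistent with the definitions set up in Section 3.

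\textbf{Step 1 (weighted identity).} For $f\neq 0$ set
\[
\lambda=\left(\int_\Sigma|f|^2|x|^2w\,dx\Big/\int_\Sigma|\nabla f|^2w\,dx\right)^{1/4}
\]
and write $f=g\,e^{-|x|^2/(2\lambda^2)}$. Expand $|\nabla f|^2$ and integrate the cross term $-\lambda^{-2}\int x\cdot\nabla(g^2)\,e^{-|x|^2/\lambda^2}w$ by parts. Euler's identity gives $\operatorname{div}(xw)=(n+\alpha)w$, and the boundary term vanishes because $x\cdot\eta=0$ on $\partial\Sigma$. This should yield
\[
\delta_w(f)=\frac{\lambda^2}{2}\int_\Sigma|\nabla g|^2e^{-|x|^2/\lambda^2}w\,dx ,
\]
exactly as in the unweighted identity \eqref{HUPI}. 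The computation is first done for smooth compactly supported $f$ and then extended by density in the weighted space $S_0$.

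\textbf{Step 2 (rescaling and Poincar\'e).} Substitute $x=\lambda y/\sqrt2$ and $h(y)=g(\lambda y/\sqrt2)$. Homogeneity gives $w(x)=(\lambda/\sqrt2)^\alpha w(y)$, so $e^{-|x|^2/\lambda^2}w(x)\,dx$ is a constant multiple of $d\mu_w(y)$, and $|\nabla_y h|^2=\tfrac{\lambda^2}{2}|\nabla g|^2$. Hence $\delta_w(f)=C_\lambda\int_\Sigma|\nabla h|^2d\mu_w$, and \eqref{PGH} gives
\[
\delta_w(f)\ge(1+K_w)\,C_\lambda\int_\Sigma|h-c|^2d\mu_w,\qquad c=\int_\Sigma h\,d\mu_w .
\]
Undoing the scaling, and noting that the factor $\lambda^2/2$ is absorbed exactly, this becomes
\[
\delta_w(f)\ge(1+K_w)\int_\Sigma\bigl|f-ce^{-|x|^2/(2\lambda^2)}\bigr|^2w\,dx\ge(1+K_w)\,d_w^2(f,E_{HUPW}).
\]
One must check that $h\in W^{1,2}_A(\Sigma,\mu_w)$, i.e.\ an approximation argument respecting the Neumann condition, which the cone geometry allows.

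\textbf{Step 3 (equality when $w$ is partial).} Suppose $\partial_{x_i}w\equiv0$. Then $y_i$ is an eigenfunction of $-\Delta+y\cdot\nabla-\nabla\log w\cdot\nabla$ with eigenvalue $1$, and the relevant $K_w$ equals $0$ in the direction $e_i$, so \eqref{PGH} is attained by $a+by_i$. I will take $f=(a+bx_i)e^{-|x|^2/2}$ with $b\neq0$, so that $f\notin E_{HUPW}$.

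Two things need checking for this $f$. First, $\lambda=1$: the odd symmetry in $x_i$ of the $\Sigma$–$w$ structure makes the cross terms vanish. Second, the infimum defining $d_w$ is attained at $c=a$, $\beta=1/2$, so that the last inequality in Step 2 is also an equality.

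The main obstacle is this second point: identifying the minimizing $(c,\beta)$ in $d_w$. I would handle it by showing that $\int x_i\,e^{-\beta|x|^2}e^{-|x|^2/2}w\,dx=0$, which makes the $bx_i$ part orthogonal to every element of $E_{HUPW}$, and then minimizing the remaining expression $\int|a e^{-|x|^2/2}-ce^{-\beta|x|^2}|^2w\,dx$ directly.
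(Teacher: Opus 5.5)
Your proposal is correct and follows essentially the paper's route: the weighted identity of Proposition~\ref{UNP}, then \eqref{PGH} transported to the scale $\lambda/\sqrt2$ (which the paper packages as Theorem~\ref{T3.1}), and equality tested on $(a+bx_i)e^{-|x|^2/2}$ (the paper takes $a=0$, $b=1$). Your check that $x_ie^{-|x|^2/2}$ is orthogonal to every $ce^{-\beta|x|^2}$, so the infimum is attained over all $(c,\beta)$ and not only over $c$ at $\lambda=1$, is slightly more careful than the paper. Two small points: $\lambda=1$ also needs $2\int_\Sigma x_i^2e^{-|x|^2}w\,dx=\int_\Sigma e^{-|x|^2}w\,dx$, not just oddness. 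And the needed fact is $K_w=0$ globally, not only in the direction $e_i$; it holds because $\nabla^2\log w(tx)=t^{-2}\nabla^2\log w(x)$, which forces $K_w=0$ for every homogeneous $w$ satisfying \eqref{star}.
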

Here $$d_w (f, E_{HUPW}) = \inf_{c,\lambda\neq 0} \left(\int_\Sigma \left|f - ce^{-\frac{|x|^2}{2\lambda^2}}\right|^2 w(x)dx\right)^{\frac{1}{2}}$$ and
$$
\delta_{w}\left(  f\right)  :=\left(  \int_\Sigma|\nabla
f|^{2}w(x)dx\right)  ^{\frac{1}{2}}\left(  \int_\Sigma|f|^{2}|x|^{2}w(x)dx\right)  ^{\frac{1}{2}}-\frac{n+\alpha}{2}\int_\Sigma|f|^{2}w(x)dx.$$

It is worth mentioning that functional and geometric inequalities with homogeneous weights have attracted substantial interest recently; see, e.g., \cite{DPH25, LLR25, LLZ19, Wan22}. Notably, Bakry--Gentil--Ledoux combined stereographic projection with the curvature-dimension (CD) condition to prove sharp Sobolev inequalities with monomial weights \cite{BGL14}. Prompted by a question of Haim Br\'ezis \cite{Brezis1}, Cabr\'e and Ros-Oton first analyzed regularity of stable solutions to reaction-diffusion equations in double-revolution domains \cite{CR}, later establishing Sobolev, Morrey, Trudinger, and isoperimetric inequalities with monomial weights \cite{CRO}. Optimal Trudinger--Moser and Hardy inequalities followed in \cite{Lam17}. Nguyen then used mass transport to extend these results and identify optimal constants and extremals in Gagliardo--Nirenberg and logarithmic Sobolev inequalities with monomial weights \cite{NVH}. The $p$-Bessel pair framework yielded sharp $L^p$-Hardy and Hardy--Sobolev inequalities with monomial weights \cite{DLL22}, later generalized to Dunkl weights \cite{ADNP}. Meanwhile, Cabr\'e--Ros-Oton--Serra applied the ABP method to show that, for homogeneous weights $w$ of degree $\alpha \ge 0$ with $w^{1/\alpha}$ concave (when $\alpha > 0$), origin-centered balls solve the weighted isoperimetric problem on cones \cite{CRS16}. We note that this concavity implies the log-concavity of $w$, that is, $w$ satisfies \eqref{star} with $K_w \geq 0$. The author in \cite{Lam21} subsequently employed optimal transport to sharpen weighted isoperimetric and Caffarelli--Kohn--Nirenberg inequalities under these assumptions. Euclidean logarithmic Sobolev inequalities with homogeneous weights have also been studied in \cite{BDK24}.

\medskip

Following the Poincar\'e inequality, it is natural to consider the logarithmic Sobolev inequality (LSI) on the weighted Gaussian measure $\mu_w$. Recall that the LSI for a measure $\mu$ takes the form
\[
\text{Ent}_{d\mu}(f^2) \leq 2C \int_\Sigma |\nabla f|^2 \, d\mu + D \int_\Sigma f^2 \, d\mu,
\]
where $\text{Ent}_{d\mu}(f) = \int_\Sigma f \log f \, d\mu - \left( \int_\Sigma f \, d\mu \right) \log \left( \int_\Sigma f \, d\mu \right)$, with constants $C > 0$ and $D \geq 0$. The inequality is called \emph{tight} when $D = 0$.

The classical logarithmic Sobolev inequality on $\mathbb{R}^n$ with Gaussian measure was established by Leonard Gross in his seminal paper~\cite{GRO75}, where he derived its tight form and used it to prove hypercontractivity of the Ornstein--Uhlenbeck semigroup~\cite{GRO75b}; this was built on earlier work by Stam~\cite{Stam59} and Federbush~\cite{Fede69}. This inequality strengthens the Poincar\'e inequality, as a tight LSI implies the latter with optimal constant. 

Our third principal aim in this article is to prove that the weighted Gaussian measure $\mu_w$ satisfies a tight logarithmic Sobolev inequality with optimal constant $C = \frac{1}{1+K_w}$. More precisely, we will prove that

\begin{theorem}\label{T4} Assume that $w$ satisfies \eqref{star}. For $f\in W^{1,q}_A(\Sigma,\mu_w)$, $\int_\Sigma |f|^q\,d\mu_w=1$,
\begin{equation}
\frac{2}{q}\int_\Sigma |f|^q\log |f|\,d\mu_w 
\le \frac{1}{1+K_w} \Bigl( \int_\Sigma |\nabla f|^q\,d\mu_w \Bigr)^{2/q}.
\end{equation}
\end{theorem}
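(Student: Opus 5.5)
The natural route is to obtain Theorem \ref{T4} as the limiting case $p\to q^-$ of the generalized Beckner inequality in Theorem \ref{T1}, in the same way Theorem \ref{T2.1} was obtained by Taylor expansion.

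\textbf{Step 1 (limit in $p$).} Fix $f\in W^{1,q}_A(\Sigma,\mu_w)$ with $\int_\Sigma |f|^q\,d\mu_w=1$, and set
\[
\phi(p):=\Bigl(\int_\Sigma |f|^p\,d\mu_w\Bigr)^{2/p},\qquad 1\le p\le q .
\]
Theorem \ref{T1} applied with $1\le p<q$ reads
\[
\frac{\phi(q)-\phi(p)}{q-p}\le \frac{1}{1+K_w}\Bigl(\int_\Sigma|\nabla f|^q\,d\mu_w\Bigr)^{2/q},
\]
and the right-hand side does not depend on $p$. Letting $p\to q^-$, the left-hand side tends to the left derivative $\phi'(q^-)$, provided this derivative exists.

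\textbf{Step 2 (compute $\phi'(q)$).} Write $\phi(p)=\exp\bigl(\tfrac{2}{p}\log M(p)\bigr)$ with $M(p)=\int_\Sigma|f|^p\,d\mu_w$. Differentiating gives
\[
\phi'(p)=\phi(p)\Bigl(-\frac{2}{p^2}\log M(p)+\frac{2}{p}\,\frac{M'(p)}{M(p)}\Bigr),\qquad M'(p)=\int_\Sigma|f|^p\log|f|\,d\mu_w .
\]
At $p=q$ we have $M(q)=1$, so $\phi(q)=1$ and $\log M(q)=0$. Hence
\[
\phi'(q)=\frac{2}{q}\int_\Sigma |f|^q\log|f|\,d\mu_w,
\]
which is exactly the left-hand side of the claimed inequality.

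\textbf{Step 3 (justification).} The main technical point is the one-sided differentiability of $M$ at $q$ from the left, i.e. passing the derivative under the integral sign.

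For $p\in[q/2,q]$, the difference quotient $(|f|^q-|f|^p)/(q-p)$ is handled by the mean value theorem: it equals $|f|^{s}\log|f|$ for some $s\in(p,q)$.
\begin{itemize}
\item On $\{|f|\ge1\}$ it is bounded by $|f|^q\log|f|$, which is nonnegative, so monotone convergence applies there.
\item On $\{|f|<1\}$ we use $|t^s\log t|\le C_{q}$ for $t\in(0,1)$ and $s\ge q/2$. Since $\mu_w$ is a probability measure, dominated convergence applies there.
\end{itemize}
Thus $M'(q^-)=\int_\Sigma|f|^q\log|f|\,d\mu_w$ holds in $(-\infty,+\infty]$.

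If this integral were $+\infty$, the left derivative $\phi'(q^-)$ would also be $+\infty$. That contradicts the finite bound from Step 1, so the integral is finite and the limit is legitimate. Alternatively, one can first prove the inequality for bounded $f\in A$ and then extend it by density together with Fatou's lemma.

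The edge case $q=1$ cannot occur, since Theorem \ref{T1} requires $1\le p<q$, so $q>1$. Combining Steps 1–3 yields Theorem \ref{T4}.
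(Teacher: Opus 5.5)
Your proposal is correct and is essentially the paper's proof: let $p\to q^-$ in Theorem \ref{T1}, identify the limit of the left side as the derivative of $\phi(t)=\bigl(\int_\Sigma|f|^t\,d\mu_w\bigr)^{2/t}$ at $t=q$, and use $\int_\Sigma|f|^q\,d\mu_w=1$ to reduce $\phi'(q)$ to $\frac{2}{q}\int_\Sigma|f|^q\log|f|\,d\mu_w$ (the paper writes $\phi'(q)$ in entropy form and normalizes only at the end). Your Step 3 adds a justification the paper omits, namely differentiation under the integral and finiteness of the entropy term; note, though, that your argument, like the paper's, is only as strong as Theorem \ref{T1}, whose proof actually needs $q\ge 2$ (it uses H\"older with exponents $\frac{q}{q-2}$ and $\frac{q}{2}$), so $q>1$ is not the real range even though you only assert $q>1$ as a consequence of Theorem \ref{T1}'s stated hypothesis.
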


In particular, we have

\begin{theorem}\label{T5} Assume that $w$ satisfies \eqref{star}. For $f\in W^{1,2}_A(\Sigma,\mu_w)$, we have 
\begin{equation}
    \text{Ent}_{d\mu_w}(f^2) \leq \frac{2}{1+K_w}\int_\Sigma |\nabla f|^2 \, d\mu_w. \label{LSIHG}
\end{equation}
Moreover, if $w$ is partial, equality is achieved by nontrivial functions. 
\end{theorem}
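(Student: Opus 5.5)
The inequality \eqref{LSIHG} will be derived from Theorem \ref{T4} with $q=2$, and the equality case will be verified by an explicit test function. Although Theorem \ref{T5} could also be obtained as the limit $p\to2^-$ of Theorem \ref{T2}, using Theorem \ref{T4} is more direct.

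First, reduce to the normalized case. Let $f\in W^{1,2}_A(\Sigma,\mu_w)$ with $\int_\Sigma f^2\,d\mu_w>0$; if $f=0$ $\mu_w$-a.e., both sides vanish. Put $g=f/\|f\|_{L^2(\mu_w)}$, so that $\int_\Sigma g^2\,d\mu_w=1$. Theorem \ref{T4} with $q=2$ gives
\[
\int_\Sigma g^2\log g^2\,d\mu_w=2\int_\Sigma g^2\log|g|\,d\mu_w\le \frac{2}{1+K_w}\int_\Sigma|\nabla g|^2\,d\mu_w .
\]
Because $\mathrm{Ent}_{d\mu_w}(\lambda h)=\lambda\,\mathrm{Ent}_{d\mu_w}(h)$ for $\lambda>0$, and $\int_\Sigma g^2\,d\mu_w=1$ makes the second term of the entropy vanish, the left side equals $\mathrm{Ent}_{d\mu_w}(g^2)$. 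Multiplying through by $\|f\|_{L^2(\mu_w)}^2$ then yields \eqref{LSIHG}.

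Next, the equality case. Suppose $w$ is partial, say $\partial_{x_1}w\equiv0$. Then $\Sigma$ is invariant under translation in $x_1$, and $\mu_w$ factors as a one-dimensional standard Gaussian in $x_1$ times a measure $\nu$ in the remaining variables. Also $-\nabla^2\log w$ has a zero row and column in the $x_1$ direction, so $K_w\le0$. Combined with \eqref{star}, which forces $K_w>-1$, the expected extremizers are Gaussian-type exponentials.

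I would test $f=e^{ax_1}$. In the product structure $\mathrm{Ent}(f^2)=2a^2 e^{2a^2}$ and $\int|\nabla f|^2=a^2e^{2a^2}$, giving ratio $2$. This is equality only if $K_w=0$. The general case needs to exploit the direction where $-\nabla^2\log w$ is tightest. One option is to test $e^{a\,x\cdot v}$ along a direction $v$ and an affine structure of $\log w$ realizing $K_w$. Another, if the paper intends this, is to read $K_w$ in the partial case as forcing equality via the $x_1$-direction. I would check which normalization the authors adopt for $K_w$ when $w$ is partial, probably $K_w=0$ under log-concavity.

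The main obstacle is this sharpness claim: matching the exponential test function to the constant $\frac{1}{1+K_w}$ when $K_w\neq0$, and checking that $e^{ax_1}$ lies in $W^{1,2}_A$, that is, satisfies the Neumann condition $\nabla f\cdot\eta=0$. The latter holds because $\eta\perp e_1$ on $\partial\Sigma$ by translation invariance.
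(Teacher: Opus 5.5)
\textbf{Inequality part.} Your proof of \eqref{LSIHG} is correct and differs only mildly from the paper's. You specialize Theorem \ref{T4} to $q=2$, which is the $p\to 2^-$ endpoint of the semigroup Beckner argument of Theorem \ref{T1}, and you remove the normalization via $\text{Ent}_{d\mu_w}(\lambda h)=\lambda\,\text{Ent}_{d\mu_w}(h)$. The paper instead quotes the Bakry--\'Emery theorem (Proposition 5.7.1 of Bakry--Gentil--Ledoux) directly from the condition $CD(1+K_w,\infty)$ established in the proof of Theorem \ref{T1}. The curvature input is the same. The citation is shorter; your route stays inside the paper but inherits the technical steps of Theorems \ref{T1} and \ref{T4}.

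\textbf{Equality clause.} Your test function and computation are exactly the paper's: it takes $f=e^{bx_n}$ and finds $\text{Ent}_{d\mu_w}(f^2)=2b^2e^{2b^2}=2\int_\Sigma|\nabla f|^2\,d\mu_w$. The place where you stop is a genuine gap, because this is equality in \eqref{LSIHG} only if $K_w=0$.

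The paper closes the gap by simply asserting that a partial weight has $K_w=0$ (the same assertion appears in the proofs of Theorems \ref{T2} and \ref{T3}). Your objection is right: independence of $x_1$ only gives $e_1^{T}(-\nabla^2\log w)\,e_1=0$, hence $K_w\le 0$.
\begin{itemize}
\item Take $w(x)=e^{x_1^2/4}$ on $\mathbb{R}^2$. It is independent of $x_2$ with $K_w=-\frac12$, so the constant in \eqref{LSIHG} is $4$. The exponential in the free variable $x_2$ gives ratio $2$ only. Equality is attained instead by $e^{bx_1}$, but only because here $\mu_w=N(0,2)\otimes N(0,1)$ is exactly Gaussian.
\item Take $w(x)=(\cosh x_1)^{1/2}$. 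It is again partial with $K_w=-\frac12$, but $-\partial_1^2\log w=-\frac{1}{2\cosh^2 x_1}>-\frac12$ for $x_1\neq 0$. No nonconstant function attains the constant $4$, by the rigidity of the equality case in the Bakry--\'Emery log-Sobolev inequality: equality forces $\mu_w$ to split off a one-dimensional Gaussian factor of variance $1/(1+K_w)$, which this $\mu_w$ does not have.
\end{itemize}

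So your suggested repair (a direction along which $\log w$ is exactly quadratic) cannot prove the clause as stated, and neither can any other argument. The clause needs the extra hypothesis $K_w=0$. This holds, for instance, when $w$ is log-concave. It also holds when $w$ is homogeneous of degree $\alpha\ge0$ on a cone with vertex at the origin: then $-\nabla^2\log w(tx)=t^{-2}\bigl(-\nabla^2\log w(x)\bigr)$, and letting $t\to0$ and $t\to\infty$ in \eqref{star} gives $K_w=0$.

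Under that hypothesis, your computation with $e^{ax_1}$, together with your check that $\nabla e^{ax_1}\cdot\eta=0$ on $\partial\Sigma$, is a complete proof and coincides with the paper's.
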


It is well known that the Gaussian logarithmic Sobolev inequality admits several equivalent formulations with respect to Lebesgue measure. In particular, via a suitable change of variables, the standard Gaussian LSI yields the following Euclidean logarithmic Sobolev inequality:
\begin{equation}\label{ElogS}
\int_{\mathbb{R}^n} |f|^2 \ln |f| \, dx \leq \frac{n}{4} \ln \left( \frac{2}{\pi n e} \int_{\mathbb{R}^n} |\nabla f|^2 \, dx \right),
\end{equation}
under the normalization $\int_{\mathbb{R}^n} |f|^2 \, dx = 1$. We refer, for instance, to~\cite{Wei78} for further details and related formulations.

Since their introduction, logarithmic Sobolev inequalities--both in Gaussian and Lebesgue settings--have played a central role in analysis and probability theory. They have been extensively studied, with applications spanning functional inequalities, concentration of measure phenomena, spectral theory for diffusions, partial differential equations, and mathematical physics. See, e.g.,~\cite{BGL14, Gentil03, Gross06, Gross25, Ledoux01, Wang05, FW-99, FW-97, FW-209} and the references therein (this list is far from exhaustive).

A natural question arises: Can we establish the equivalence between the logarithmic Sobolev inequality (LSI) with respect to the weighted Gaussian measure and the LSI with respect to the general weighted measure?

We answer this affirmatively in key cases. Let $d\nu = w(x) \, dx$ denote the log concave, homogeneous measure. In this case, one must have $K_w = 0$, which allows us to identify optimizers and obtain sharper insight. Specifically, we show that

\begin{theorem}\label{T6}
Assume that $w$ is log concave and homogeneous of degree $\alpha \geq 0$. Then we have the following LSI for the homogeneous weight: for all $f \in W^{1,2}_A(\Sigma,\mu_w)$,
\begin{equation}\label{LSIH}
\text{Ent}_{d\nu}(f^2) \leq \frac{n+\alpha}{2} \int_\Sigma f^2 \, d\nu \ \log \left( C_{\mathrm{LSIH}} \frac{\int_\Sigma |\nabla f|^2 \, d\nu}{\int_\Sigma f^2 \, d\nu} \right),
\end{equation}
where the sharp constant is given by $$C_{\mathrm{LSIH}} =\frac{4 C_w^{\frac{2}{n+\alpha}}}{e(n+\alpha)}. $$ Here $C_w = \left( \int_\Sigma w(x) e^{-\frac{|x|^2}{2}} \, dx \right)^{-1}$. 
The equality holds for Gaussian functions $f = Ae^{-\frac{|x|^2}{4}}$ where $A \in \mr$.

\end{theorem}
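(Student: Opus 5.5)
The plan is to deduce \eqref{LSIH} from the Gaussian-weighted log-Sobolev inequality of Theorem~\ref{T5}, which holds with $K_w=0$ since a homogeneous log-concave weight has $K_w=0$. Three steps: a ground-state change of variables, cancellation of the $|x|^2$ terms, and optimization over dilations. By the $2$-homogeneity of both sides of \eqref{LSIH} in $f$, it suffices to treat $\int_\Sigma f^2\,d\nu=1$. By density, I first take $f\in A$ with compact support in $\overline{\Sigma}$, so that all moments below are finite.

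\textbf{Step 1 (change of variables).} Set $g(x)=C_w^{-1/2} f(x)e^{|x|^2/4}$, so that $\int_\Sigma g^2\,d\mu_w=\int_\Sigma f^2\,d\nu=1$, and $g$ is admissible in Theorem~\ref{T5}. The Neumann condition is preserved because $x\cdot\eta=0$ on $\partial\Sigma$. The entropy becomes
\[
\text{Ent}_{d\mu_w}(g^2)=\int_\Sigma f^2\log f^2\,d\nu+\frac12\int_\Sigma |x|^2f^2\,d\nu-\log C_w .
\]
Since $\nabla g=C_w^{-1/2}e^{|x|^2/4}(\nabla f+\tfrac{x}{2}f)$, we get
\[
\int_\Sigma|\nabla g|^2d\mu_w=\int_\Sigma|\nabla f|^2d\nu+\frac12\int_\Sigma x\cdot\nabla(f^2)\,d\nu+\frac14\int_\Sigma|x|^2f^2d\nu .
\]

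\textbf{Step 2 (cancellation).} Integrate the cross term by parts. Using $\operatorname{div}(xw)=nw+x\cdot\nabla w=(n+\alpha)w$ from Euler's identity, and the vanishing of the boundary term because $x\cdot\eta=0$ on $\partial\Sigma$, the cross term equals $-\frac{n+\alpha}{2}$. Plugging into \eqref{LSIHG} with $K_w=0$, the $\frac12\int|x|^2f^2\,d\nu$ terms cancel exactly, leaving
\[
\int_\Sigma f^2\log f^2\,d\nu\le 2\int_\Sigma|\nabla f|^2\,d\nu-(n+\alpha)+\log C_w .
\]

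\textbf{Step 3 (dilation).} Apply this to $f_\lambda(x)=\lambda^{(n+\alpha)/2}f(\lambda x)$. The cone is dilation invariant and $w(\lambda x)=\lambda^\alpha w(x)$, so $f_\lambda$ keeps unit $L^2(\nu)$ norm. Moreover
\[
\int_\Sigma f_\lambda^2\log f_\lambda^2\,d\nu=(n+\alpha)\log\lambda+\int_\Sigma f^2\log f^2\,d\nu,
\qquad
\int_\Sigma|\nabla f_\lambda|^2\,d\nu=\lambda^2 G,
\]
where $G=\int_\Sigma|\nabla f|^2\,d\nu$. Minimizing $2\lambda^2G-(n+\alpha)\log\lambda$ gives $\lambda^2=\frac{n+\alpha}{4G}$, and then
\[
\int_\Sigma f^2\log f^2\,d\nu\le\frac{n+\alpha}{2}\log\Bigl(\frac{4C_w^{2/(n+\alpha)}}{e(n+\alpha)}\,G\Bigr).
\]
This is \eqref{LSIH} under the normalization; homogeneity then yields the general statement.

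\textbf{Equality and the main obstacle.} Equality holds for $f=Ae^{-|x|^2/4}$, because then $g$ is constant, giving equality in Theorem~\ref{T5}, and $\lambda=1$ is the optimal dilation. A direct check ($G=\frac{n+\alpha}{4}$ after normalization) confirms this, and hence the sharpness of $C_{\mathrm{LSIH}}$. I expect the main obstacle to be the functional-analytic justification rather than the algebra. One must pass from compactly supported $f\in A$ to general $f\in W^{1,2}_A$ with $\int|\nabla f|^2\,d\nu<\infty$, where $\int|x|^2f^2\,d\nu$ might be infinite. It is the cancellation in Step 2 that makes the final inequality independent of this moment. I would therefore argue by truncation and approximation, using lower semicontinuity of the entropy term and continuity of the gradient term.
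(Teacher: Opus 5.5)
Your proposal is correct and follows essentially the same route as the paper. Both arguments substitute $F=f/h$ with $h=\sqrt{C_w}\,e^{-|x|^2/4}$ into Theorem~\ref{T5} with $K_w=0$, and both cancel the $\int_\Sigma |x|^2f^2\,d\nu$ terms via $\operatorname{div}(xw)=(n+\alpha)w$. They then optimize over the norm-preserving dilations $f_\lambda(x)=\lambda^{(n+\alpha)/2}f(\lambda x)$ at $\lambda^2=\frac{n+\alpha}{4G}$ and check equality for $Ae^{-|x|^2/4}$. Your normalization $\int_\Sigma f^2\,d\nu=1$, your check that the Neumann condition survives the substitution, and your density/truncation remarks are extra care that the paper omits; they do not change the method.
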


Moreover, equivalence holds fully in this case:

\begin{corollary}\label{C2}
Assume that $w$ is log concave and homogeneous of degree $\alpha \geq 0$. Then the homogeneous LSI~\eqref{LSIH} is equivalent to the homogeneous Gaussian LSI~\eqref{LSIHG}.
\end{corollary}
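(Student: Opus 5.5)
The plan is to pass between the two inequalities through the ground-state substitution $f=C_w^{1/2}e^{-|x|^2/4}g$, and then to use the homogeneity of $w$ to add a free scaling parameter that is optimized at the end. Since $w$ is log concave and homogeneous, the introduction shows that $K_w=0$. So \eqref{LSIHG} reads $\mathrm{Ent}_{d\mu_w}(g^2)\le 2\int_\Sigma|\nabla g|^2\,d\mu_w$. By homogeneity of both sides it suffices to treat $\int_\Sigma f^2\,d\nu=1$, where $d\nu=w\,dx$ and $d\mu_w=C_w\,w\,e^{-|x|^2/2}\,dx$. I will work on the dense class $A$ and conclude by approximation.

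\textbf{Step 1 (transfer identities).} Set $g=C_w^{-1/2}e^{|x|^2/4}f$. Then $g^2\,d\mu_w=f^2\,d\nu$, so $\int_\Sigma g^2\,d\mu_w=1$. Next, $\nabla g=C_w^{-1/2}e^{|x|^2/4}(\nabla f+\tfrac{x}{2}f)$, hence
\[
\int_\Sigma|\nabla g|^2\,d\mu_w=\int_\Sigma|\nabla f|^2\,d\nu+\frac12\int_\Sigma x\cdot\nabla(f^2)\,w\,dx+\frac14\int_\Sigma |x|^2f^2\,d\nu .
\]
I integrate the middle term by parts, using $\operatorname{div}(xw)=nw+x\cdot\nabla w=(n+\alpha)w$ (Euler's identity). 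The boundary term vanishes because $x\cdot\eta=0$ on $\partial\Sigma$. The middle term therefore equals $-\frac{n+\alpha}{2}$. For the entropy, $\log g^2=\log f^2+\frac{|x|^2}{2}-\log C_w$, so
\[
\mathrm{Ent}_{d\mu_w}(g^2)=\int_\Sigma f^2\log f^2\,d\nu+\frac12\int_\Sigma|x|^2f^2\,d\nu-\log C_w .
\]
Substituting into \eqref{LSIHG}, the $|x|^2$ terms cancel exactly. This yields the equivalent \emph{linear} form
\[
\int_\Sigma f^2\log f^2\,d\nu\le 2\int_\Sigma|\nabla f|^2\,d\nu-(n+\alpha)+\log C_w ,\qquad \int_\Sigma f^2\,d\nu=1. \tag{$\ast$}
\]
Every step is reversible, so $(\ast)$ for all admissible $f$ is equivalent to \eqref{LSIHG} for all admissible $g$.

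\textbf{Step 2 (scaling).} Write $N=n+\alpha$ and $G=\int_\Sigma|\nabla f|^2\,d\nu$. Since $\Sigma$ is a cone and $w(\lambda x)=\lambda^\alpha w(x)$, the rescaled function $f_\lambda(x)=\lambda^{N/2}f(\lambda x)$ stays admissible and keeps $\int_\Sigma f_\lambda^2\,d\nu=1$. Under this rescaling the gradient term becomes $\lambda^2G$ and the entropy becomes $\int_\Sigma f^2\log f^2\,d\nu+N\log\lambda$. Applying $(\ast)$ to $f_\lambda$ gives, for every $\lambda>0$,
\[
\int_\Sigma f^2\log f^2\,d\nu\le 2\lambda^2G-N\log\lambda-N+\log C_w .
\]
The right-hand side is minimized at $\lambda^2=N/(4G)$. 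At that choice it equals
\[
\tfrac N2\log\bigl(\tfrac{4G}{N}\bigr)-\tfrac N2+\log C_w=\tfrac N2\log\Bigl(\tfrac{4C_w^{2/N}}{eN}\,G\Bigr),
\]
which is exactly \eqref{LSIH} with $C_{\mathrm{LSIH}}$. This proves \eqref{LSIHG}$\Rightarrow$\eqref{LSIH}.

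\textbf{Step 3 (converse).} Assume \eqref{LSIH}. Use $\log t\le t-1$ with $t=4G/N$ in the right-hand side:
\[
\tfrac N2\log\bigl(\tfrac{4G}{N}\bigr)-\tfrac N2+\log C_w\le 2G-N+\log C_w .
\]
This is $(\ast)$, and Step 1 then gives back \eqref{LSIHG}. Equality is consistent: $f=Ae^{-|x|^2/4}$ corresponds to constant $g$ and to $\lambda=1$, where $4G=N$.

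The main obstacle is justifying the integration by parts and the admissibility of the transformed functions. One must check that $f\in A$ corresponds to $g$ in $W^{1,2}_A(\Sigma,\mu_w)$, and conversely via density. The boundary contribution is removed by $x\cdot\eta=0$, and the Neumann condition is preserved since $\nabla(e^{|x|^2/4})\cdot\eta=\tfrac12 e^{|x|^2/4}\,x\cdot\eta=0$. The entropy term needs a truncation or monotone convergence argument to pass to the limit.
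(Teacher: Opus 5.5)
Your proof is correct and follows essentially the same route as the paper. The forward direction is the ground-state substitution $f=hF$ with $h=\sqrt{C_w}\,e^{-|x|^2/4}$, followed by the dilation $f_\lambda$ and optimization in $\lambda$, exactly as in the proof of Theorem~\ref{T6}. The converse uses $\log t\le t-1$ (the paper writes it as $\log t\le st-\log s-1$ with $s=e\,C_w^{-2/(n+\alpha)}$) to recover the linear form, and then undoes the substitution. Your explicit observation that the substitution and the dilation map $A$ to itself (preserving the Neumann condition because $x\cdot\eta=0$ on $\partial\Sigma$) is a useful detail that the paper leaves implicit.
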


The paper is organized as follows: In Section 2, we apply the semigroup approach to establish generalized Beckner inequalities for weighted Gaussian measures. In Section 3, we use the Bakry-\'Emery $\Gamma$-Calculus approach and the duality method to investigate the gradient and $L^2$ stability of the Poincar\'e inequalities with weighted Gaussian measures, and use these results to study the stability of the Heisenberg Uncertainty Principle with homogeneous weights. In Section 4, we derive the logarithmic Sobolev inequalities with weighted Gaussian measures and use them to investigate the Euclidean logarithmic Sobolev inequalities with homogeneous log-concave weights.

\section{Beckner inequalities with weighted Gaussian measures-Proofs of Theorem \ref{T1} and Theorem \ref{T2}}    
We will now show that Beckner inequality holds for weighted Gaussian measure. 

\begin{proof}[Proof of Theorem \ref{T1}]
Consider the diffusion operator 
\[
\mathbf{L}_{w}:=\Delta-x\cdot\nabla+\nabla \log w\cdot\nabla
\]
associated to the homogeneous Gaussian measure $d\mu_w$ and the standard \emph{carré du champ} operator $\Gamma_w (u,v) = \nabla u \cdot \nabla v$. Note that $-\mathbf{L}_{w}$ is a non negative symmetric operator with respect to the measure $\mu_w$. The Dirichlet form is defined as follows: 
\begin{equation*}
    \mathcal{E}_w(u,v) := \int_\Sigma \nabla u \cdot \nabla v d\mu_w.
\end{equation*}
We also define the iterated \emph{carré du champ} operator $\Gamma_2$ by 
\begin{align*}
        \Gamma_2(f) &= \Gamma_2(f,f)= \frac{1}{2}\mathbf{L}_{w}\Gamma_w(f,f) - \Gamma_w(f,\mathbf{L}_{w}f) \\ 
        &=  \sum_{i,j}\left|f_{ij}\right|^2 + \sum_{i,j}f_j f_{jii} - \sum_{i,j}x_j f_i f_{ij} +\sum_{i,j} \frac{w_i}{w}f_{ij}f_j  \\ 
        &- \sum_{i,j}f_j f_{jii} + \sum_{i}\left|f_i\right|^2 + \sum_{i,j}x_j f_i f_{ij} - \sum_{i,j} \frac{w_{ij}w - w_i w_j}{w^2} f_if_j - \sum_{i,j}\frac{w_i}{w}f_{ij}f_j  \\
        &=  \sum_{i,j}\left|f_{ij}\right|^2 +  \sum_{i}\left|f_i\right|^2 - \sum_{i,j}\frac{w_{ij}w - w_i w_j}{w^2} f_if_j \\
        &= ||\nabla^2 f||_{F}^2 + |\nabla f|^2 - \nabla^2 \log w (\nabla f, \nabla f),
    \end{align*}
where $$||A||_F = \sqrt{\sum_{i,j}|a_{ij}|^2}$$ is the Frobenius norm of the matrix $A$. \\
Since $$-\nabla^2 \log w\geq K_w \ \mathbb{I},$$ we have 
\begin{align*}
    \Gamma_2(f) &\geq ||\nabla^2 f||_F^2 + |\nabla f|^2 +K_w|\nabla f|^2 \\
    & \geq ||\nabla^2 f||_F^2 +  (1+K_w) \ \Gamma(f). 
\end{align*}
So, the probability measure $d\mu_w$ satisfies Curvature-Dimension condition $CD(1+K_w, \infty)$.

Now, WLOG, we can assume that $f \geq 0$. Recall that $P_tf$ is the Markov semigroup with infinitesimal generator $\mathbf{L}_w$ and invariant reversible measure $\mu_w$.  Then consider the function

    $$\phi(t) = \left(\int_{\Sigma} (P_tf^p)^{\frac{q}{p}}d\mu_w\right)^{\frac{2}{q}}.$$
    Then $$\phi(0) = \left(\int_{\Sigma} (f^p)^{\frac{q}{p}}d\mu_w\right)^{\frac{2}{q}} =\left(\int_{\Sigma} f^q d\mu_w\right)^{\frac{2}{q}} =  ||f||_q^2 . $$
    And since the semigroup is ergodic we have $P_tf \to \int_\Sigma f d\mu$. Then 
    \begin{align*}
        \lim_{t\to \infty} \phi(t) = \lim_{t\to \infty}\left(\int_{\Sigma} (P_tf^p)^{\frac{q}{p}}d\mu_w\right)^{\frac{2}{q}} = \left(\int_{\Sigma} \left(\int_{\Sigma} f^p d\mu_w \right)^{\frac{q}{p}}d\mu_w\right)^{\frac{2}{q}}
        = \left(||f||_p^q\right)^{\frac{2}{q}} = ||f||_p^2 . 
    \end{align*}
    Thus we can write
    \begin{align*}
        \int_0^\infty -\phi'(t) dt = \phi(0) - \phi(\infty)  = ||f||_q^2 - ||f||_p^2 .
    \end{align*}
    Now we would like to bound the derivative. 
    \begin{align*}
        \phi'(t) &= \frac{d}{dt}\left(\int_{\Sigma} (P_tf^p)^{\frac{q}{p}}d\mu_w\right)^{\frac{2}{q}} \\& = \frac{2}{q}\underbrace{\left(\int_{\Sigma} (P_tf^p)^{\frac{q}{p}}d\mu_w\right)^{\frac{2}{q}-1}}_{\alpha(t)} \frac{d}{dt} \int_{\Sigma} (P_tf^p)^{\frac{q}{p}}d\mu_w \\
        &= \frac{2}{q}\alpha(t) \int_{\Sigma} \frac{q}{p}(P_tf^p)^{\frac{q}{p}-1} \frac{d}{dt}P_tf^p d\mu_w \\
        &= \frac{2}{p}\alpha(t) \int_{\Sigma} (P_tf^p)^{\frac{q}{p}-1} L_wP_tf^p d\mu_w .
    \end{align*}

Using properties of the diffusion operator $L_w$ we can rewrite 
    \begin{align*}
        \int_{\Sigma} (P_tf^p)^{\frac{q}{p}-1} L_w P_tf^p d\mu_w  &= -\int_\Sigma \nabla \left((P_tf^p)^{\frac{q}{p}-1}\right) \cdot \nabla \left(P_tf^p\right)  d\mu_w \\
        &= -\int_\Sigma \left(\frac{q}{p}-1\right)(P_tf^p)^{\frac{q}{p}-2} \nabla\left(P_tf^p\right) \cdot \nabla\left(P_tf^p\right) d\mu_w \\
        &= - \int_\Sigma \left(\frac{q}{p}-1\right)(P_tf^p)^{\frac{q}{p}-2} |\nabla\left(P_tf^p\right)|^2 d\mu_w .
    \end{align*}
Thus 
    $$- \phi'(t) = \frac{2}{p}\left(\frac{q}{p}-1\right)\alpha(t) \int_\Sigma (P_tf^p)^{\frac{q}{p}-2} |\nabla\left(P_tf^p\right)|^2 d\mu_w .$$
Under curvature dimension condition $CD(1+K_w,\infty)$ we have the strong gradient bounds \cite[Theorem 3.2.4]{BGL14} $$|\nabla\left(P_tf^p\right)|^2  \leq e^{-2(1+K_w)t} P_t(|\nabla (f^p)|)^2 = e^{-2(1+K_w)t} P_t(pf^{p-1} |\nabla f|)^2 .$$
So 
    \begin{align*}
        -\phi'(t) &\leq \frac{2}{p}\left(\frac{q}{p}-1\right)\alpha(t) \int_\Sigma (P_tf^p)^{\frac{q}{p}-2} e^{-2(1+K_w)t} P_t(pf^{p-1} |\nabla f|)^2 d\mu_w  \\
        &= 2e^{-2(1+K_w)t} (q-p)\alpha(t) \int_\Sigma (P_tf^p)^{\frac{q}{p}-2} P_t(f^{p-1} |\nabla f|)^2 d\mu_w .
    \end{align*} 

We can use kernel representation of the semigroup $P_t$ and Holder's inequality to get the bound $$P_t(f^{p-1} |\nabla f|)^2 \leq (P_tf^p)^{2 - \frac{2}{p}} P_t(|\nabla f|^p)^{\frac{2}{p}} .$$
Finally applying Holder's inequality again with $a = \frac{q}{q-2}$ and $b = \frac{q}{2}$ we get 
\begin{align*}
        -\phi'(t) &\leq 2e^{-2(1+K_w)t} (q-p)\alpha(t) \int_\Sigma (P_tf^p)^{\frac{q}{p}-\frac{2}{p}} P_t(|\nabla f|^p)^{\frac{2}{p}} d\mu_w \\
        &\leq 2e^{-2(1+K_w)t} (q-p)\left(\int_\Sigma (P_t|\nabla f|^p)^{ \frac{q}{p}} d\mu_w\right)^{\frac{2}{q}} .
    \end{align*} 
Since $ 1\leq p \leq q$, $\frac{q}{p} \geq 1$ so the function $g(s) = s^{\frac{q}{p}}$ is convex. Thus by Jensen's inequality and invariance we get 
$$-\phi'(t) \leq  2e^{-2(1+K_w)t} (q-p)\left(\int_\Sigma |\nabla f|^q d\mu_w\right)^{\frac{2}{q}} .$$
Integrating from $0$ to $\infty$ we get the desired result:
    \begin{align*}
        \int_0^\infty -\phi'(t) dt &\leq 2 (q-p) ||\nabla f||_q^2  \int_0^\infty e^{-2(1+K_w)t} dt \\
        &= 2 (q-p) ||\nabla f||_q^2 \frac{1}{2(1+K_w)}\\
        &= \frac{q-p}{1+K_w}||\nabla f||_q^2 .
    \end{align*}
\end{proof}

\begin{proof}[Proof of Theorem \ref{T2}] 
Since the measure $\mu_w$ satisfies Curvature Dimension condition $CD(1+K,\infty) $, by \cite[Proposition 7.6.1]{BGL14} we obtain Beckner inequality with $C = \frac{1}{1+K_w}$: 
    $$\frac{2}{2-q} \left(\int_\Sigma f^2 d\mu_w - \left(\int_\Sigma |f|^{q}d\mu_w \right)^{\frac{2}{q}}\right) \leq \frac{1}{1+K_w} \int_\Sigma |\nabla f|^2 d\mu_w .$$
    Next, we would like to show the constant in Beckner's Inequality is sharp. 
Take $u\in C_c^\infty$ such that $\int_{\Sigma} ud\mu_w = 0$ and $\int_{\Sigma} |u|^{2} d\mu_w = 1$. Consider $f = 1+ \varepsilon u$ where $\varepsilon$ is small enough so that $f > 0$. Then 
$$\int_{\Sigma} f^2 d\mu_w = \int_{\Sigma} 1 + 2\varepsilon u + \varepsilon ^2 |u|^{2} d\mu_w = 1 + \varepsilon^2 .$$
Consider $\varphi(t) = t^q$, then 
\begin{align*}
    \varphi(t) &= \varphi(a) + \varphi'(a)(t-a) + \frac{1}{2}\varphi''(a)(t-a)^2 + O(\varepsilon^3) \\
    \varphi(1+\varepsilon u)&= \varphi(1) + \varphi'(1)(1+\varepsilon u -1) + \frac{1}{2}\varphi''(1)(1+\varepsilon u -1)^2 + O(\varepsilon^3) \\
    &= 1 + q(\varepsilon u) +\frac{q(q-1)}{2} (\varepsilon u)^2 + O(\varepsilon^3) . 
\end{align*}
Integrating, we have 
\begin{align*}
    \int_{\Sigma} (1+\varepsilon u)^q d\mu_w &= \int_{\Sigma} 1 + q(\varepsilon u) +\frac{q(q-1)}{2} (\varepsilon u)^2 + O(\varepsilon^3) d\mu_w \\&= 1  + \frac{q(q-1)}{2}\varepsilon^2 + O(\varepsilon^3). 
\end{align*}
Doing the same for $\psi(t) = t^{\frac{2}{q}}$, we have $\psi' = \frac{2}{q}t^{\frac{2}{q}-1}$ and $\psi'' = \frac{2}{q}\left(\frac{2}{q}-1\right)t^{\frac{2}{q}-2}$. Thus
$$\psi(t) =  \psi(a) + \psi'(a)(t-a) + \frac{1}{2}\psi''(a)(t-a)^2 + O(\varepsilon^3).$$
Set $b = \frac{q(q-1)}{2}$ and $z = b\varepsilon^2 + O(\varepsilon^3)$. 
\begin{align*}
    \psi\left( 1  + z\right) &= 1 + \frac{2}{q}z + O(z^2) \\
    &= 1+ \frac{2}{q} \frac{q(q-1)}{2} \varepsilon^2 + \underbrace{O(\varepsilon^3) + O(z^2) }_{O(\varepsilon^3)}\\
    &= 1+ (q-1) \varepsilon^2 + O(\varepsilon^3) .
\end{align*}
Thus we can expand: 
\begin{align*}
    \int_\Sigma f^2  d\mu_{w} - \left(\int_\Sigma |f|^{q} d\mu_{w} \right)^{\frac{2}{q}} &= 1 + \varepsilon^2 - (1+ (q-1) \varepsilon^2 + O(\varepsilon^3) ) \\
&= (2-q)\varepsilon^2 + O(\varepsilon^3) .
\end{align*}
On the RHS we have 
$$\nabla f = \nabla (1+\varepsilon u) = \varepsilon\nabla u .$$
So 
$$\int_{\Sigma} |\nabla f|^2 d\mu_w = \varepsilon^2 \int_{\Sigma} |\nabla u|^2 d\mu_w.$$
Thus we have 
\begin{equation*}
    \varepsilon^2 + O(\varepsilon^3)  \leq \frac{1}{1+K_w} \varepsilon^2 \int_{\Sigma} |\nabla u|^2 d\mu_w.
\end{equation*}
Dividing by $\varepsilon^2$ and taking $\varepsilon \to 0$ we get:
$$\int_{\Sigma} |u|^{2} d\mu_w - \left(\int_{\Sigma} u d\mu_w\right)^2 =1\leq \frac{1}{1+K_w}\int_{\Sigma} |\nabla u|^2 d\mu_w .$$
If $w$ is partial we know that $K_w = 0$ and we recover the sharp Poincar\'e inequality. Thus the Beckner inequality must also be sharp. By taking $u_0$ to be the optimizer of the Poincar\'e inequality we get a sequence $f_n = 1 + \frac{1}{n}u_0$ that approaches equality. 
\end{proof}

\section{Poincar\'e Inequalities with weighted Gaussian measure and the Stability of HUP with homogeneous weight}
\subsection{Proofs of Theorems \ref{T2.1} and \ref{T3}}
In this subsection, we will use Theorem \ref{T1} and Bakry-\'Emery $\Gamma$-Calculus to establish Poincar\'e inequality with homogeneous Gaussian measure $d\mu_w$. Recall that \begin{equation*}
    d\mu_w = \frac{w(x) e^{-\frac{|x|^2}{2}}dx}{\int_\Sigma w(x) e^{-\frac{|x|^2}{2}}dx}. 
\end{equation*}
\begin{proof}[Proof of Theorem \ref{T2.1}]
    Instead, what if we expand around the function $f = 1+ \varepsilon g$ where $g \in C_c^\infty$ with $\int_\Sigma g d\mu_w = 0$ and for $\varepsilon $ small enough so that $f\geq 0$. Then on the RHS we have 
    $$\left(\int_\Sigma |\nabla f|^q d\mu_w\right)^{\frac{2}{q}} = \left(\int_\Sigma |\nabla( 1+\varepsilon g)|^q d\mu_w\right)^{\frac{2}{q}} = \left(\int_\Sigma |\varepsilon\nabla g|^q d\mu_w\right)^{\frac{2}{q}} = \varepsilon^2 \left(\int_\Sigma |\nabla g|^q d\mu_w\right)^{\frac{2}{q}}.$$
    On the LHS we would like to expand the term $||f||^2_q - ||f||^2_p$.
    We can use the Binomial/Taylor approximation $(1+x)^{\alpha} \approx 1 + \alpha x + \frac{1}{2}\alpha(\alpha -1)x^2 $: 
    \begin{align*}
        |1+\varepsilon g|^{q} &= 1 + \varepsilon g q + \frac{q(q-1)}{2} \varepsilon^2g^2 + O(\varepsilon^3). \\
        \int_\Sigma  |1+\varepsilon g|^{q} d\mu_w &= \int_\Sigma 1 + \varepsilon g q + \frac{q(q-1)}{2} \varepsilon^2g^2 + O(\varepsilon^3) d\mu_w \\
        &= 1 + \frac{q(q-1)}{2}\varepsilon^2 \int_\Sigma g^2 d\mu_w + O(\varepsilon^3).
    \end{align*}
    \begin{align*}
        \left(\int_\Sigma |1+\varepsilon g|^{q} d\mu_{w} \right)^{\frac{2}{q}}  &= \left(1 + \underbrace{\frac{q(q-1)}{2}\varepsilon^2 \int_\Sigma g^2 d\mu_w + O(\varepsilon^3}_{A})\right)^{\frac{2}{q}} \\
        &\approx 1 + \frac{2}{q}A + O(A^2) \\
        &= 1 + \frac{2}{q}\frac{q(q-1)}{2}\varepsilon^2 \int_\Sigma g^2 d\mu_w + O(\varepsilon^3) + O(\varepsilon^4) \\
        &= 1+ (q-1) \varepsilon^2 \int_\Sigma g^2 d\mu_w + O(\varepsilon^3).
    \end{align*}
    Thus we have 
    \begin{align*}
        ||f||^2_q - ||f||^2_p &= \left(\int_\Sigma f^{q} d\mu_{w} \right)^{\frac{2}{q}} - \left(\int_\Sigma f^p  d\mu_{w}\right)^\frac{2}{p} \\
        &= 1+ (q-1) \varepsilon^2 \int_\Sigma g^2 d\mu_w + O(\varepsilon^3) - \left(1+ (p-1) \varepsilon^2 \int_\Sigma g^2 d\mu_w + O(\varepsilon^3)\right)\\
        &= (q-p) \varepsilon^2 \int_\Sigma g^2 d\mu_w + O(\varepsilon^3).
    \end{align*}
    So Beckner inequality becomes
    \begin{align*}
        \frac{1}{q-p}\left( \left(\int_\Sigma f^{q} d\mu_{w} \right)^{\frac{2}{q}} - \left(\int_\Sigma f^p  d\mu_{w}\right)^\frac{2}{p}\right)  &= \frac{1}{q-p}\left(  (q-p) \varepsilon^2 \int_\Sigma g^2 d\mu_w + O(\varepsilon^3)\right) \\
        &= \varepsilon^2 \int_\Sigma g^2 d\mu_w + O(\varepsilon^3) \\
        &\leq \frac{1}{1+K_w} \varepsilon^2 \left(\int_\Sigma |\nabla g|^q d\mu_w\right)^{\frac{2}{q}}.
    \end{align*}
    Dividing by $\varepsilon^2$ we get
    $$\int_\Sigma g^2 d\mu_w + O(\varepsilon) \leq \frac{1}{1+K_w}  \left(\int_\Sigma |\nabla g|^q d\mu_w\right)^{\frac{2}{q}}.$$
    Taking the limit as $\varepsilon \to 0$ we get 
    $$\int_\Sigma g^2 d\mu_w = \text{Var}_{d\mu_w}(g) \leq \frac{1}{1+K_w}||\nabla g||^2_q.$$
\end{proof}

\begin{proof}[Proof of Theorem \ref{T3}]
As in the proof of Theorem \ref{T1}, the probability measure $d\mu_w$ satisfies Curvature-Dimension condition $CD(1+K_w, \infty)$. Therefore, by \cite[Proposition 4.8.1]{BGL14}, we obtain the Poincar\'e
inequality for homogeneous Gaussian measure $d\mu_{w}$ with constant $1 + K_w$ for $K_w > -1$. In other words, 
\begin{equation*}
    \int_\Sigma \left\vert \nabla f\right\vert ^{2}d\mu_{w}
        \geq (1+K_w)\int_\Sigma \left\vert f-\int_\Sigma 
        fd\mu_{w}\right\vert ^{2}d\mu_{w}.
\end{equation*}
We can also prove this by the duality approach that will be useful in studying the improvement of the Poincar\'e inequality. More precisely, consider the diffusion operator 
\[
\mathbf{L}_{w}:=\Delta-x\cdot\nabla+\nabla \log w\cdot\nabla
\]
associated to the homogeneous Gaussian measure $d\mu_w$. Assume that $\int_\Sigma fd\mu_w=0$ and let $u$ be the solution of Poisson equation $-\mathbf{L}_{w}u = f$. Note that $u$ satisfies the Neumann boundary condition: $\nabla u\cdot
\mathbf{\eta}=0\text{ on }\partial\Sigma$, where $\mathbf{\eta}$ is the outer normal vector of $\Sigma$.
See \cite[Lemma 5.4]{Bonn22}, for instance. Recall that with the standard \textit{carré du champ} operator $\Gamma_w (u,v) = \nabla u \cdot \nabla v$, we have $$\Gamma_2(u) = ||\nabla^2 u||_F^2 + |\nabla u|^2 - \nabla^2 \log w (\nabla u, \nabla u) .$$
Integrating both sides we get 
\begin{align*}
\int_\Sigma |f|^2 d\mu_w &= \int_\Sigma |\mathbf{L}_{w}u|^2 d\mu_w = \int_\Sigma \Gamma_2(u) d\mu_w \\
&=\int_\Sigma |\nabla^2 u|^2 d\mu_w +  \int_\Sigma |\nabla u|^2 d\mu_w - \int_\Sigma \nabla^2 \log w (\nabla u,\nabla u) d\mu \\
&\geq \int_\Sigma |\nabla^2 u|^2 d\mu_w + (1+K_w) \int_\Sigma |\nabla u|^2 d\mu_w\\
&\geq (1+K_w) \int_\Sigma |\nabla u|^2 d\mu_w.
\end{align*}
Using the identity 
$$\int_\Sigma |c\nabla u - \nabla f|^2 d\mu_w  = c^2\int_\Sigma |\nabla u |^2d\mu_w + \int_\Sigma |\nabla f|^2d\mu_w - 2c\int_\Sigma |f|^2 d\mu_w, $$
we get 
\begin{align*}
    \int_\Sigma|c\nabla u - \nabla f|^2 d\mu_w &\leq\frac{c^2}{1+K_w} \int_\Sigma |f|^2 d\mu_w+ \int_\Sigma |\nabla f|^2 d\mu_w - 2c\int_\Sigma |f|^2 d\mu_w \\
    &= \left(\frac{c^2}{1+K_w} - 2c\right)\int_\Sigma |f|^2 d\mu_w  + \int_\Sigma |\nabla f|^2 d\mu_w.
\end{align*}
By choosing $c=(1+K_w)$, we deduce that
$$\int_\Sigma |\nabla f|^2 d\mu_w - (1+K_w)\int_\Sigma |f|^2 d\mu_w \geq \int_\Sigma|(1+K_w)\nabla u - \nabla f|^2 d\mu_w \geq 0.$$

Now, suppose that the weight $w(x) = w(x_1,\dots,x_N)$ is independent of variables $x_1, \dots, x_i$. Then $K_w $ is identically $0$, and the optimizers are given by $f(x) = a_1x_1 + \dots + a_ix_i + b$. Indeed, we have $$\nabla f = (a_1, \dots, a_i, 0, \dots, 0)$$ and $$|\nabla f|^2 = \sum_{k=1}^i a_k^2.$$
On the RHS $$|f|^2 = \sum_{k=1}^i a_k^2 x_k^2 + 2\sum_{k<j}a_ka_jx_kx_j + 2b \sum_{k=1}^i a_k x_k
+ b^2.$$
\begin{align*}
    \left(\int_\Sigma f d\mu_w\right)^2 = \left(\int_\Sigma a_kx_k + b d\mu_w \right)^2 = b^2.
\end{align*}
Since $$\int_\Sigma x_k d\mu_w = \int_\Sigma x_k e^{-\frac{1}{2}x_k^2}dx_k = 0.$$
Thus \begin{align*}
    \int_\Sigma \left|f-\int_\Sigma f d\mu_w\right|^2 d\mu_w  &= \int_\Sigma \sum_{k=1}^i a_k^2 x_k^2 d\mu_w+ 2\sum_{k<j} \int_\Sigma a_ka_jx_kx_j d\mu_w\\&+2b \sum_{k=1}^i \int_\Sigma a_k x_kd\mu_w
    +\int_\Sigma b^2 d\mu_w - b^2 \\
    &= \sum_{k=1}^i a_k^2.
\end{align*}

Next, we refine the above analysis to establish an improved Poincar\'e inequality that captures gradient stability effects. This refinement reveals higher-order corrections tied to the weight's curvature $K_w$, yielding sharper variance control for functions in $W^{1,2}_A(\Sigma,\mu_w)$.

Assume that $\int_\Sigma fd\mu=0$. Let $u$ be the solution of Poisson equation $-\mathbf{L}_{w}u = f$. Since $$\Gamma_2(u) = ||\nabla^2 u||_F^2 + |\nabla u|^2 - \nabla^2 \log w (\nabla u, \nabla u),$$
by integrating both sides, we get 
\begin{align*}
        \int_\Sigma ||\nabla^2 u||_F^2 d\mu_w +  \int_\Sigma |\nabla u|^2 d\mu_w - \int_\Sigma \nabla^2 \log w (\nabla u,\nabla u) d\mu_w = \int_\Sigma \Gamma_2(u) d\mu_w
        &= \int_\Sigma |\mathbf{L}_{w}u|^2 d\mu_w \\
        &= \int_\Sigma |f|^2 d\mu_w.
    \end{align*}
Under our additional assumption  $$- \nabla^2 \log w (\nabla u,\nabla u) \geq K_w|\nabla u|^2 \geq 0,$$
and thus 
$$\int_\Sigma |f|^2 d\mu_w \geq  \int_\Sigma ||\nabla^2 u||_F^2 + (1+K_w)\int_\Sigma |\nabla u|^2 d\mu_w. $$
Now, we note that
\[
\int_\Sigma ||\nabla^{2}u||_{F}^{2}d\mu_{w}=\sum_{i}%
\int_\Sigma |\nabla\partial_{i}u|^{2}d\mu_{w}.
\]
By applying the Poincar\'{e} inequality with homogeneous Gaussian weight (Theorem
\ref{T1}), we get
\begin{align*}
\sum_{i}\int_\Sigma |\nabla\partial_{i}u|^{2}d\mu_{w}  &
\geq\sum_{i}(1+K_w)\int_\Sigma \left\vert \partial_{i}u-\int
 \partial_{i}ud\mu_{w}\right\vert ^{2}d\mu_{w}\\
&  =(1+K_w)\int_\Sigma |\nabla u-\int_\Sigma \nabla
ud\mu_{w}|^{2}d\mu_{w} .
\end{align*}
Also,
\begin{align*}
\int_\Sigma \partial_{i}ud\mu_{w}  &  =-\int_\Sigma u\mathbf{L}_{w}x_{i}d\mu_{w}\\
&  =-\int_\Sigma x_{i}\mathbf{L}_{w}ud\mu_{w}\\
&  =\int_\Sigma x_{i}f  d\mu_{w}.
\end{align*}
Therefore,
\[
\int_\Sigma ||\nabla^{2}u||_{F}^{2}d\mu_{w}\geq (1+K_w)\int_\Sigma
 \left\vert \nabla u-\int_\Sigma f xd\mu_{w}\right\vert
^{2}d\mu_{w}.%
\]
Combining this with our estimate we get
$$\int_\Sigma |f|^2 d\mu_w \geq (1+K_w)\int_\Sigma
 \left\vert \nabla u-\int_\Sigma f xd\mu_{w}\right\vert
^{2}d\mu_{w} + (1+K_w)\int_\Sigma |\nabla u|^2 d\mu_w.$$
Using the identity 
$$\int_\Sigma |c\nabla u - \nabla f|^2 d\mu_w  = c^2\int_\Sigma |\nabla u |^2d\mu_w + \int_\Sigma |\nabla f|^2d\mu_w - 2c\int_\Sigma |f|^2 d\mu_w $$
together with the estimate we rewrite
    \begin{align*}
        \int_\Sigma |c\nabla u - \nabla f|^2 d\mu_w & \leq \frac{c^2}{1+K_w} \left(\int_\Sigma |f|^2 d\mu_w  - (1+K_w)\int_\Sigma
        \left\vert \nabla u-\int_\Sigma f xd\mu_{w}\right\vert
        ^{2}d\mu_{w}\right) \\
        &+ \int_\Sigma |\nabla f|^2d\mu_w - 2c\int_\Sigma |f|^2 d\mu_w \\
        &= \left(\frac{c^2}{1+K_w}-2c\right)\int_\Sigma |f|^2 d\mu_w  -c^2\int_\Sigma
        \left\vert \nabla u-\int_\Sigma f xd\mu_{w}\right\vert
        ^{2}d\mu_{w}+ \int_\Sigma |\nabla f|^2d\mu_w,
    \end{align*}
    and thus 
    \begin{align*}
        \int_\Sigma |\nabla f|^2d\mu_w - \left(2c - \frac{c^2}{1+K_w} \right)\int_\Sigma |f|^2 d\mu_w &\geq \int_\Sigma |c\nabla u - \nabla f|^2 d\mu_w +  c^2\int_\Sigma
        \left\vert \nabla u-\int_\Sigma f xd\mu_{w}\right\vert
        ^{2}d\mu_{w} \\
        &\geq \frac{1}{2}\int_\Sigma \left|\nabla f - c \int_\Sigma fx d\mu_w\right|^2 d\mu_w \\
        &\geq \frac{1}{2} \int_\Sigma \left|\nabla \left[f - c \int_\Sigma fx d\mu_w\cdot x\right]\right|^2 d\mu_w.
    \end{align*}
By choosing $c = (1+K_w)$, we deduce that 
$$\int_\Sigma |\nabla f|^2d\mu_w - (1+K_w)\int_\Sigma |f|^2 d\mu_w \geq \frac{1}{2} \int_\Sigma \left|\nabla \left[f - (1+K_w)\int_\Sigma fx d\mu_w\cdot x\right]\right|^2 d\mu_w.$$
\end{proof}

\subsection{Scale-dependent versions of the Poincar\'e inequality}

In this subsection, we assume that $w$ is homogeneous of degree $\alpha \geq 0$. As shown in \cite{CFLL24}, Theorems \ref{T3} is not enough to study the stability of the HUP. We need a version of scale-dependent Poincar\'e inequality with homogeneous Gaussian weight. In other words, for $\lambda > 0$ we can define a scale dependent homogeneous Gaussian measure $d\mu_{w,\lambda} =  \frac{w(x)e^{-\frac{|x|^2}{2\lambda^2}}dx}{\int_\Sigma  w(x)e^{-\frac{|x|^2}{2\lambda^2}} dx}$. Let $X_{w,\lambda}$ be the completion of $A$ under the norm $\left(\int_\Sigma | f|^2 d\mu_{w,\lambda}\right)^{\frac{1}{2}} + \left(\int_\Sigma  |\nabla f|^2 d\mu_{w,\lambda}\right)^{\frac{1}{2}}$. Then we establish the following Poincar\'e inequality: 
\begin{theorem}
    \label{T3.1}Assume that $w$ satisfies \eqref{star}. For $f\in X_{\lambda,w}$, we have
    \begin{align}
        \int_\Sigma \left|\nabla f \right|^{2} d\mu_{w,\lambda} &\geq
        \frac{1+K_w}{\lambda^2}\int_\Sigma \left\vert f-\int_\Sigma 
        f d\mu_{w,\lambda} \right\vert ^{2} d\mu_{w,\lambda} \nonumber \\
        &\geq \frac{1+K_w}{\lambda^2}\inf_{c} \int_\Sigma |f-c|^2  d\mu_{w,\lambda}. \label{SPIH}%
    \end{align}
    Moreover if $w$ is partial, then \eqref{SPIH} is achieved by non-constant functions.
\end{theorem}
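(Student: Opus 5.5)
The plan is to reduce \eqref{SPIH} to the unscaled Poincar\'e inequality \eqref{PGH} of Theorem \ref{T3} by a change of variables, using the homogeneity of $w$.

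\textbf{Rescaling.} Given $f\in X_{w,\lambda}$, set $g(y)=f(\lambda y)$. Since $\Sigma$ is a cone, the map $x=\lambda y$ sends $\Sigma$ onto itself. Homogeneity gives $w(\lambda y)=\lambda^\alpha w(y)$, so
\begin{equation*}
w(x)e^{-\frac{|x|^2}{2\lambda^2}}\,dx=\lambda^{n+\alpha}w(y)e^{-\frac{|y|^2}{2}}\,dy .
\end{equation*}
The factor $\lambda^{n+\alpha}$ cancels against the normalizing constant, so the push-forward of $\mu_w$ under $y\mapsto\lambda y$ is exactly $\mu_{w,\lambda}$. Consequently
\begin{equation*}
\int_\Sigma h(x)\,d\mu_{w,\lambda}(x)=\int_\Sigma h(\lambda y)\,d\mu_w(y)
\end{equation*}
for every integrable $h$. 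Note also that \eqref{star} is invariant under this rescaling, since $\nabla^2\log w$ is unaffected by the additive constant $\alpha\log\lambda$. Even without homogeneity one can argue directly with the operator $\mathbf{L}=\Delta-\lambda^{-2}x\cdot\nabla+\nabla\log w\cdot\nabla$: its $\Gamma_2$ equals $\|\nabla^2 f\|_F^2+\lambda^{-2}|\nabla f|^2-\nabla^2\log w(\nabla f,\nabla f)$.

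\textbf{Transferring the inequality.} Since $\nabla g(y)=\lambda(\nabla f)(\lambda y)$, we get $\int|\nabla g|^2d\mu_w=\lambda^2\int|\nabla f|^2d\mu_{w,\lambda}$ and $\int g\,d\mu_w=\int f\,d\mu_{w,\lambda}$. Likewise the variance of $g$ under $\mu_w$ equals the variance of $f$ under $\mu_{w,\lambda}$. The class $A$ is preserved by the rescaling: the Neumann condition $\nabla u\cdot\eta=0$ on $\partial\Sigma$ is dilation invariant because the normal to a cone is constant along rays. Hence $g\in W^{1,2}_A(\Sigma,\mu_w)$, and applying \eqref{PGH} to $g$ then dividing by $\lambda^2$ yields the first inequality in \eqref{SPIH}.

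\textbf{Second inequality.} This is trivial: taking $c=\int f\,d\mu_{w,\lambda}$ shows the variance is at least the infimum over $c$. In fact the two quantities are equal, since the mean minimizes $c\mapsto\int|f-c|^2d\mu_{w,\lambda}$.

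\textbf{Equality case.} If $w$ is independent of $x_1$, then $K_w=0$ by Theorem \ref{T3}. Take $f(x)=x_1$, so $g(y)=\lambda y_1$. The function $y_1$ attains equality in \eqref{PGH}, so $f$ attains equality in \eqref{SPIH}. Directly, $\int x_1\,d\mu_{w,\lambda}=0$ and $\int x_1^2\,d\mu_{w,\lambda}=\lambda^2$ by Fubini in the $x_1$ variable; note that $\Sigma$ is then a product in $x_1$ as well. This gives $1=\lambda^{-2}\cdot\lambda^2$.

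\textbf{Main obstacle.} The only delicate point is the functional-analytic bookkeeping: checking that the dilation maps $X_{w,\lambda}$ onto $W^{1,2}_A(\Sigma,\mu_w)$, and that $x_1$ belongs to the space, which it does by Gaussian moments and a cutoff approximation.
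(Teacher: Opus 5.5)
Your argument is correct and is essentially the paper's: the dilation $x=\lambda y$ together with $w(\lambda y)=\lambda^{\alpha}w(y)$ pushes $\mu_w$ forward to $\mu_{w,\lambda}$ and turns \eqref{PGH} into \eqref{SPIH} with the factor $\lambda^{-2}$ (the paper does the same substitution in the form $f(x)=g(x/\lambda)$ followed by $\lambda\mapsto 1/\eta$). Your explicit equality check with $f=x_1$ fills in a step the paper leaves implicit, but drop the aside about arguing ``even without homogeneity'': the $\Gamma_2$ computation for $\Delta-\lambda^{-2}x\cdot\nabla+\nabla\log w\cdot\nabla$ only gives $CD(\lambda^{-2}+K_w,\infty)$, i.e.\ the constant $\lambda^{-2}+K_w$, which agrees with $(1+K_w)\lambda^{-2}$ only when $K_w=0$, and without homogeneity the stated constant can fail (e.g.\ $w=e^{-|x|^2/2}$, $\lambda=1/2$, $f=x_1$ gives $1<8/5$).
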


\begin{proof}[Proof of Theorem \ref{T3.1}]
    We have from Theorem \ref{T3} that
    \begin{equation*}
        \int_\Sigma |\nabla f|^{2} w(x)e^{-\frac{1}{2}|x|^{2}}
        dx\geq 
         (1+K_w)\int_\Sigma \left\vert f-\frac{1}{\int_\Sigma  w(x)e^{-\frac{1}{2}|x|^{2}}dx}\int_\Sigma 
        f w(x)e^{-\frac{1}{2}|x|^{2}}dx\right\vert ^{2} w(x)e^{-\frac{1}{2}|x|^{2}
        }dx.
    \end{equation*}
    Let $f(x) = g\left(\frac{x}{\lambda}\right)$ then $\nabla f = \frac{1}{\lambda} g\left(\frac{x}{\lambda}\right)$. Then the expression becomes 
    
    \begin{gather*}
        \int_\Sigma \frac{1}{\lambda^2}\left|\nabla g\left(\frac{x}{\lambda}\right) \right|^{2} w(x)e^{-\frac{1}{2}|x|^{2}}
        dx \\
        \geq (1+K_w)
         \int_\Sigma \left\vert g\left(\frac{x}{\lambda}\right)-\frac{1}{\int_\Sigma  w(x)e^{-\frac{1}{2}|x|^{2}}dx}\int_\Sigma 
        g\left(\frac{x}{\lambda}\right) w(x)e^{-\frac{1}{2}|x|^{2}}dx\right\vert ^{2} w(x)e^{-\frac{1}{2}|x|^{2}
        }dx.
    \end{gather*}

    Next we let $y= \frac{x}{\lambda}$. Then $dx = \lambda^n dy$ and the expression becomes

    \begin{gather*}
        \int_\Sigma \frac{1}{\lambda^2}\left|\nabla g(y) \right|^{2} w(\lambda y)e^{-\frac{1}{2}|\lambda y|^{2}}
         \lambda^n dy \\
        \geq (1+K_w)
         \int_\Sigma \left\vert g(y)-\frac{1}{\int_\Sigma  w(\lambda y)e^{-\frac{1}{2}|\lambda y|^{2}}\lambda^n dy}\int_\Sigma 
        g\left(y\right) w(\lambda y)e^{-\frac{1}{2}|\lambda y|^{2}}dy\right\vert ^{2} w(\lambda y)e^{-\frac{1}{2}|\lambda y|^{2}
        }\lambda^n dy.
    \end{gather*}

    From homogeneity of $w$, we have $w(\lambda y) = \lambda^\alpha w(y)$. So we get

    \begin{gather*}
        \int_\Sigma \frac{\lambda^{n+\alpha}}{\lambda^2}\left|\nabla g(y) \right|^{2} 
        w(y)e^{-\frac{1}{2}|\lambda y|^{2}} \, dy \\
        \geq (1+K_w) \int_\Sigma \left| 
        g(y) - \frac{1}{\lambda^{n+\alpha}\int_\Sigma w(y)e^{-\frac{1}{2}|\lambda y|^{2}} \, dy}
        \int_\Sigma g(y) w(y)e^{-\frac{1}{2}|\lambda y|^{2}} \, dy 
        \right|^{2} 
        w(y)e^{-\frac{1}{2}|\lambda y|^{2}} \lambda^{n+\alpha} \, dy.
    \end{gather*}

    Finally we can divide both sides by $\lambda^{n+\alpha}$ and changing $\lambda =\frac{1}{\eta}$ we get

    \begin{gather*}
        \eta^2 \int_\Sigma \left|\nabla g \right|^{2} w(y)e^{-\frac{1}{2 \eta^2}|y|^{2}}
          dy \geq(1+K_w)
         \int_\Sigma \left\vert g-\frac{1}{\int_\Sigma  w(y)e^{-\frac{1}{2\eta^2}|y|^{2}} dy}\int_\Sigma 
        g w(y)e^{-\frac{1}{2\eta^2}|y|^{2}}dy\right\vert ^{2} w(y)e^{-\frac{1}{2\eta^2}|y|^{2}
        }dy.
    \end{gather*}
    So letting $d\mu_{w,\eta} = \frac{1}{\int_\Sigma  w(y)e^{-\frac{1}{2\eta^2}|y|^{2}} dy} w(y)e^{-\frac{1}{2\eta^2}|y|^2}dy$, we can rewrite this as 
    \begin{gather*}
        \eta^2 \int_\Sigma \left|\nabla g \right|^{2} d\mu_{w,\eta} \geq(1+K_w)
         \int_\Sigma \left\vert g-\int_\Sigma 
        g d\mu_{w,\eta} \right\vert ^{2} d\mu_{w,\eta}.
    \end{gather*}
    Thus we get 
    \begin{equation*}
        \int_\Sigma \left|\nabla g \right|^{2} d\mu_{w,\eta} \geq \frac{1+K_w}{\eta^2}\inf_{c} \int_\Sigma |g-c|^2  d\mu_{w,\eta}. 
    \end{equation*}
\end{proof}

Similarly we can can obtain improved scale-dependent Poincar\'e inequality with homogeneous Gaussian weight: 

\begin{theorem}
    \label{T4.1}Assume that $w$ satisfies \eqref{star}. For $\lambda > 0$ and $f\in X_{\lambda,w}$, we have 
    \begin{align}
    &\lambda^2 \int_\Sigma |\nabla f|^2 d\mu_{w,\lambda} \nonumber\\  & \geq (1+K_w)\inf_{c}\int_\Sigma |f- c|^2 w(x)e^{-\frac{|x|^2}{2 \lambda^2}} dx + \frac{1+K_w}{2}\inf_{c, \mathbf{d}} \int_\Sigma \left|f - (c+\mathbf{d}\cdot x)\right|^2 w(x) e^{-\frac{|x|^2}{2 \lambda^2}}dx.
    \end{align}
\end{theorem}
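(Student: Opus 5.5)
The plan is to reduce to the unscaled case $\lambda=1$ using Theorem \ref{T3} and then rescale exactly as in the proof of Theorem \ref{T3.1}. I will prove the inequality with the same normalized measure $d\mu_{w,\lambda}$ on both sides, i.e.\ with $w(x)e^{-|x|^2/(2\lambda^2)}dx$ on the right replaced by $d\mu_{w,\lambda}$. The statement as printed mixes $d\mu_{w,\lambda}$ on the left with the unnormalized density on the right; since $\int_\Sigma w(x)e^{-|x|^2/(2\lambda^2)}dx=\lambda^{n+\alpha}\int_\Sigma w(y)e^{-|y|^2/2}dy$ by homogeneity, this only changes the normalizing constant. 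I will therefore treat the normalized form as the intended one and track the constant at the end.

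\emph{Step 1 (the case $\lambda=1$).} For $f\in W^{1,2}_A(\Sigma,\mu_w)$ I first apply \eqref{IPH}. Put $\mathbf{m}=(1+K_w)\int_\Sigma (f-\int_\Sigma f\,d\mu_w)\,x\,d\mu_w$ and $h=f-\mathbf{m}\cdot x$. Then
\[
\int_\Sigma|\nabla f|^2d\mu_w\ge (1+K_w)\int_\Sigma\Bigl|f-\int_\Sigma f\,d\mu_w\Bigr|^2d\mu_w+\frac12\int_\Sigma|\nabla h|^2d\mu_w .
\]
Next I apply the Poincar\'e inequality \eqref{PGH} to $h$. This is legitimate because $h$ differs from $f$ by a linear function, which is admissible after the usual approximation in $W^{1,2}_A$ (Gaussian moments of $|x|^2 w$ are finite). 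It gives
\[
\frac12\int_\Sigma|\nabla h|^2d\mu_w\ge \frac{1+K_w}{2}\int_\Sigma\Bigl|f-\mathbf{m}\cdot x-\int_\Sigma h\,d\mu_w\Bigr|^2d\mu_w\ge\frac{1+K_w}{2}\inf_{c,\mathbf d}\int_\Sigma|f-(c+\mathbf d\cdot x)|^2d\mu_w .
\]
In the first term I also bound the variance from below by $\inf_c\int_\Sigma|f-c|^2d\mu_w$. The fact that $\int_\Sigma x\,d\mu_w$ need not vanish on a cone causes no trouble, because the constant is absorbed into the infimum.

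\emph{Step 2 (rescaling).} Given $f\in X_{w,\lambda}$, set $g(y)=f(\lambda y)$, so that $\nabla g(y)=\lambda\nabla f(\lambda y)$, and apply Step 1 to $g$. Substituting $x=\lambda y$ and using $w(\lambda y)=\lambda^{\alpha}w(y)$, every integral against $w(y)e^{-|y|^2/2}dy$ becomes $\lambda^{-(n+\alpha)}$ times the corresponding integral against $w(x)e^{-|x|^2/(2\lambda^2)}dx$. This common factor cancels against the normalizing constants, so the averages turn into $d\mu_{w,\lambda}$ averages. The gradient term acquires the factor $\lambda^2$. The families $\{c\}$ and $\{c+\mathbf d\cdot y\}$ are invariant under $y=x/\lambda$, since $\mathbf d\cdot(x/\lambda)=(\mathbf d/\lambda)\cdot x$, so both infima transform into the corresponding infima in $x$. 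This yields
\[
\lambda^2\int_\Sigma|\nabla f|^2d\mu_{w,\lambda}\ge(1+K_w)\inf_c\int_\Sigma|f-c|^2d\mu_{w,\lambda}+\frac{1+K_w}{2}\inf_{c,\mathbf d}\int_\Sigma|f-(c+\mathbf d\cdot x)|^2d\mu_{w,\lambda}.
\]
To recover the printed unnormalized form, one multiplies the right-hand side by the explicit constant $\bigl(\int_\Sigma w\,e^{-|x|^2/(2\lambda^2)}dx\bigr)^{-1}$ and the left-hand side is unchanged.

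\emph{Main obstacle.} The delicate point is Step 1's second application of the Poincar\'e inequality, to $h=f-\mathbf m\cdot x$. One must check that $h\in W^{1,2}_A$: linear functions satisfy $\nabla(\mathbf m\cdot x)\cdot\eta=\mathbf m\cdot\eta$, which need not vanish on $\partial\Sigma$. Since \eqref{PGH} only requires $h$ in the $W^{1,2}(\mu_w)$ closure, I will handle this by noting that the Bakry--\'Emery Poincar\'e inequality holds for all of $W^{1,2}(\Sigma,\mu_w)$ on the convex domain $\Sigma$, where convexity makes the boundary term favorable. If that argument fails, I will instead bound $\int_\Sigma|\nabla h|^2$ directly from the duality proof via $\int_\Sigma|c\nabla u-\nabla f|^2$.
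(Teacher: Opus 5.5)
Your proposal is correct and follows essentially the paper's route. Your Step 1 is exactly how the $L^2$-stability estimate \eqref{C0} arises from \eqref{IPH} plus the Poincar\'e inequality \eqref{PGH} applied to $h=f-\mathbf m\cdot x$ (the paper simply invokes \eqref{C0}), and your Step 2 is the same dilation $g(y)=f(\lambda y)$ used there and in Theorem \ref{T3.1}. The remaining differences are cosmetic: you take the infima before rather than after rescaling, and you state the result with $d\mu_{w,\lambda}$ on both sides, whereas the paper's proof ends with the unnormalized density $w(x)e^{-|x|^2/(2\lambda^2)}\,dx$ on both sides. Your reading of the normalization mismatch in the printed statement is the right one.
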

 
\begin{proof}[Proof of Theorem \ref{T4.1}] Theorem \ref{T3} and Corollary \ref{C0} gives us
    \begin{align}
    &\int_\Sigma \left\vert \nabla f\right\vert ^{2}d\mu
        _{w}-(1+K_w)\int_\Sigma \left\vert f-\int_\Sigma fd\mu_{w}\right\vert ^{2}d\mu_{w} \nonumber\\
        &\geq \frac{1+K_w}{2} \int_\Sigma \bigg| f - \int_\Sigma f \ d\mu_w - (1+K_w)\int_\Sigma fx \ d\mu_w \cdot x + (1+K_w)\int_\Sigma f \ d\mu_w\int_\Sigma x \ d\mu_w \cdot x \nonumber \\
        &- (1+K_w)\int_\Sigma f \ d\mu_w \left|\int_\Sigma x \ d\mu_w\right|^2 +(1+K_w)\int_\Sigma fx \ d\mu_w \cdot \int_\Sigma x \ d\mu_w \bigg| ^2 d\mu_w \nonumber .
\end{align}
Let $f(x) = g(\lambda x) $, then as before 
\begin{gather*}
    \int_\Sigma |\nabla f|^2 d\mu_w - (1+K_w)\int_\Sigma \left|f - \int_\Sigma fd\mu_w\right|^2 d\mu_w \\
    = \lambda^{2-n-\alpha} \int_\Sigma |\nabla g|^2 w(x) e^{-\frac{|x|^2}{2\lambda^2}} dx - \lambda^{-n-\alpha}(1+K_w) \int_\Sigma \left|g - C(\lambda, g)\right|^2 w(x) e^{-\frac{|x|^2}{2\lambda^2}} dx,
\end{gather*}
and 
\begin{align*}
     &\frac{1+K_w}{2} \int_\Sigma \bigg| f - \int_\Sigma f \ d\mu_w -(1+K_w) \int_\Sigma fx \ d\mu_w \cdot x + (1+K_w)\int_\Sigma f \ d\mu_w\int_\Sigma x \ d\mu_w \cdot x \nonumber \\
        &- (1+K_w)\int_\Sigma f \ d\mu_w \left|\int_\Sigma x \ d\mu_w\right|^2 +(1+K_w)\int_\Sigma fx \ d\mu_w \cdot \int_\Sigma x \ d\mu_w \bigg| ^2 d\mu_w \\
        &= \frac{1+K_w}{2} \int_\Sigma \bigg|g - C(\lambda,g) - \mathbf{D}(\lambda,g) \cdot x \\
        &+ C(\lambda,g) 
 \int_\Sigma x d\mu_w \cdot x - C(\lambda,g)\left|\int_\Sigma x \ d\mu_w\right|^2 + \mathbf{D}(\lambda, g) \cdot \int_\Sigma x d\mu_w  \bigg|^2 w(x) e^{-\frac{|x|^2}{2\lambda^2}} dx
\end{align*}
for some $C(\lambda, g) \in \mr$ and $\mathbf{D}(\lambda,g) \in \mr^n$. Thus we have: 
\begin{align*}
    &\lambda^{2} \int_\Sigma |\nabla g|^2 w(x) e^{-\frac{|x|^2}{2\lambda^2}} dx \geq \inf_{C,\mathbf{D}}(1+K_w) \int_\Sigma \left|g - C\right|^2 w(x) e^{-\frac{|x|^2}{2\lambda^2}} dx  \\
    &+\frac{1+K_w}{2} \int_\Sigma \bigg|g - C - \mathbf{D} \cdot x \\
    & + C 
 \int_\Sigma x d\mu_w \cdot x - C\left|\int_\Sigma x \ d\mu_w\right|^2 + \mathbf{D} \cdot \int_\Sigma x d\mu_w  \bigg|^2 w(x) e^{-\frac{|x|^2}{2\lambda^2}} dx \\
 &\geq (1+K_w)\inf_{c}\int_\Sigma |g- c|^2 w(x) e^{-\frac{|x|^2}{2 \lambda^2}}dx + \frac{1+K_w}{2}\inf_{c, \mathbf{d}} \int_\Sigma \left|g - (c+\mathbf{d}\cdot x) \right|^2 w(x)e^{-\frac{|x|^2}{2\lambda^2}} dx.
\end{align*}
\end{proof}

\subsection{Stability of HUP with homogeneous weight}

Assume that $w$ is homogeneous of degree $\alpha \geq 0$. Now, we will check the Identity for HUP with homogeneous weight. 

\begin{proposition}
     \label{UNP}For $f\in S_w, \ f \neq 0$ and $\lambda =  \left(\frac{\int_\Sigma |f|^{2}|x|^2 w(x) dx}{\int_\Sigma |\nabla f|^2 w(x) dx}\right)^{\frac{1}{4}}$, we have that 
    \begin{align*}
         \left(\int_\Sigma |\nabla f|^2 w(x)dx\right)^{\frac{1}{2}}\left(\int_\Sigma |f|^{2}| x|^2 w(x)dx\right)^{\frac{1}{2}} &-\frac{n+\alpha}{2} \int_\Sigma |f|^{2}w(x)dx \\
        &= \frac{\lambda^2}{2} \int_\Sigma \left| \nabla\left(f e^{\frac{|x|^2}{2 \lambda^2}}\right)\right|^2 e^{-\frac{|x|^2}{\lambda^2}} w(x) dx.
    \end{align*}
\end{proposition}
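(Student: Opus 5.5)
The plan is to expand the right-hand side directly and use the Euler identity $x\cdot\nabla w=\alpha w$ together with the divergence theorem on the cone $\Sigma$. I first work with $f\in A$ (smooth, compactly supported in $\overline\Sigma$) and pass to $S_w$ by density at the end. Put $g=fe^{|x|^2/(2\lambda^2)}$. Then
\[
\nabla g=\Bigl(\nabla f+\frac{x}{\lambda^2}f\Bigr)e^{\frac{|x|^2}{2\lambda^2}},
\]
so the Gaussian factors cancel exactly and
\[
\frac{\lambda^2}{2}\int_\Sigma |\nabla g|^2e^{-\frac{|x|^2}{\lambda^2}}w\,dx=\frac{\lambda^2}{2}\int_\Sigma|\nabla f|^2w\,dx+\frac{1}{2\lambda^2}\int_\Sigma|f|^2|x|^2w\,dx+\int_\Sigma f\,x\cdot\nabla f\,w\,dx .
\]

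Next I handle the cross term. Write $f\,x\cdot\nabla f=\tfrac12 x\cdot\nabla(f^2)$ and integrate by parts:
\[
\int_\Sigma f\,x\cdot\nabla f\,w\,dx=-\frac12\int_\Sigma f^2\,\mathrm{div}(xw)\,dx+\frac12\int_{\partial\Sigma}f^2w\,x\cdot\eta\,dS .
\]
The boundary term vanishes because $x\cdot\eta=0$ on $\partial\Sigma$, which is the standing cone assumption. There is no boundary contribution at infinity since $f$ has compact support. By Euler's identity, $\mathrm{div}(xw)=nw+x\cdot\nabla w=(n+\alpha)w$, so the cross term equals $-\frac{n+\alpha}{2}\int_\Sigma f^2w\,dx$.

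Now I substitute the specific $\lambda$. Set $a=\int_\Sigma|\nabla f|^2w$ and $b=\int_\Sigma f^2|x|^2w$, so that $\lambda^2=\sqrt{b/a}$. Then
\[
\frac{\lambda^2}{2}a+\frac{1}{2\lambda^2}b=\frac12\sqrt{ab}+\frac12\sqrt{ab}=\sqrt{ab},
\]
and this gives the claimed identity. Here $a>0$ because $f\neq0$, since a nonzero compactly supported function cannot have zero gradient.

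The main obstacle is justifying the integration by parts and the density step, not the algebra itself. Two points need care. First, $w$ may fail to be smooth up to $\partial\Sigma$, or may vanish there, as with monomial weights. Second, $S_w$ is presumably the completion of $A$ under $\bigl(\int|\nabla f|^2w\bigr)^{1/2}+\bigl(\int|f|^2|x|^2w\bigr)^{1/2}$. For the first point I would apply the divergence theorem on $\Sigma\cap\{\mathrm{dist}(x,\partial\Sigma)>\epsilon\}$ and use $w\ge0$ together with $x\cdot\eta=0$ to control the boundary term as $\epsilon\to0$. For the second point I need every term of the identity to be continuous in that norm. The weighted HUP \eqref{HUPH}, proved for smooth functions by the same computation with Cauchy--Schwarz, bounds $\int f^2w$ by the norm, so all terms pass to the limit. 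This also covers $\lambda$, which depends continuously on $a$ and $b$.
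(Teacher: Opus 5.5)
Your proposal is correct and follows the paper's proof. In both, you expand the square so the Gaussian factors cancel, integrate the cross term by parts using $\mathrm{div}(xw)=(n+\alpha)w$, and substitute the stated $\lambda$; the paper phrases this last step as minimizing $G(\lambda)$, which is your computation $\frac{\lambda^2}{2}a+\frac{b}{2\lambda^2}=\sqrt{ab}$. Your handling of the boundary term via $x\cdot\eta=0$ on $\partial\Sigma$ and of the density passage to $S_w$ makes explicit steps the paper leaves implicit.
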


Here $S_w$ is the completion of $A$ under the norm $\left(\int_\Sigma |\nabla f|^2 w(x) dx\right)^{\frac{1}{2}} + \left(\int_\Sigma |x|^2 |f|^{2} w(x)dx\right)^{\frac{1}{2}}$.
\begin{proof}[Proof of Proposition \ref{UNP}]
    Checking directly we have
    \begin{align*}
         \frac{\lambda^2}{2} \int_\Sigma \left| \nabla\left(f e^{\frac{|x|^2}{2 \lambda^2}}\right)\right|^2 e^{-\frac{|x|^2}{\lambda^2}} w(x) dx = \frac{\lambda^2}{2} \int_\Sigma |\nabla f|^2 w(x) dx \\
        + \frac{1}{2\lambda^2} \int_\Sigma |f|^{2}|x|^2 w(x) dx + \int_\Sigma f(\nabla f \cdot x) w(x) dx. \notag 
    \end{align*}
    For the last term we can use integration by parts to get
    \begin{align*}
        \int_\Sigma f(\nabla f \cdot x) w(x) dx &= -\frac{1}{2}\int_\Sigma |f|^{2} \text{div}(x w(x)) dx \\
        &= -\frac{1}{2} \int_\Sigma |f|^{2}\left(nw + x\cdot\nabla w\right)dx \\
        &= -\frac{n+\alpha}{2} \int_\Sigma |f|^{2}w(x)dx .
    \end{align*}
    Define 
    $$G(\lambda) = \frac{\lambda^2}{2} \int_\Sigma |\nabla f|^2 w(x) dx \\
    + \frac{1}{2\lambda^2} \int_\Sigma |f|^{2}|x|^2 w(x) dx  -\frac{n+\alpha}{2} \int_\Sigma |f|^{2}w(x)dx.$$
    $G$ is minimized at $\lambda = \left(\frac{\int_\Sigma |f|^{2}|x|^2 w(x) dx}{\int_\Sigma |\nabla f|^2 w(x) dx}\right)^{\frac{1}{4}}$. Using this value of $\lambda$ we get the identity: 
    \begin{align*}
         \left(\int_\Sigma |\nabla f|^2 w(x)dx\right)^{\frac{1}{2}}\left(\int_\Sigma |f|^{2}| x|^2 w(x)dx\right)^{\frac{1}{2}} &-\frac{n+\alpha}{2} \int_\Sigma |f|^{2}w(x)dx \\
        &= \frac{\lambda^2}{2} \int_\Sigma \left| \nabla\left(f e^{\frac{|x|^2}{2 \lambda^2}}\right)\right|^2 e^{-\frac{|x|^2}{\lambda^2}} w(x) dx.
    \end{align*}
\end{proof}

We are now ready to establish stability and improved stability of the Heisenberg Uncertainty Principle with homogeneous measure. More precisely, take the distance function
$$d_w (f, E_{HUPW}) = \inf_{c,\lambda\neq 0} \left(\int_\Sigma \left|f - ce^{-\frac{|x|^2}{2\lambda^2}}\right|^2 w(x)dx\right)^{\frac{1}{2}}$$
then we have the following: 
\begin{theorem}
    \label{S1.1}Assume that $w$ satisfies \eqref{star}. Let $f\in S_w$, then 
    \begin{equation}
        \delta_w(f) \geq (1+K_w) d^2_w \left(f, E_{HUPW}\right).
    \end{equation}
    Moreover, if $w$ is partial, the equality is achieved by non-trivial functions $f\notin E_{HUPW}$.
\end{theorem}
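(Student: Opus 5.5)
Let $f\in S_w$, $f\neq 0$, and let $\lambda$ be the scale from Proposition \ref{UNP}. The plan is to combine that identity with the scale-dependent Poincar\'e inequality of Theorem \ref{T3.1}, with the Gaussian scale chosen so that the weighted Gaussian measure matches the factor $e^{-|x|^2/\lambda^2}w$ appearing in the identity.

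\textbf{Step 1: reduce the deficit to a Poincar\'e quantity.} Set
\[
g:=f e^{\frac{|x|^2}{2\lambda^2}}, \qquad \sigma:=\frac{\lambda}{\sqrt 2}.
\]
Then $e^{-|x|^2/\lambda^2}=e^{-|x|^2/(2\sigma^2)}$, and Proposition \ref{UNP} reads
\[
\delta_w(f)=\frac{\lambda^2}{2}\int_\Sigma |\nabla g|^2 e^{-\frac{|x|^2}{2\sigma^2}}w\,dx .
\]

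\textbf{Step 2: apply Theorem \ref{T3.1} at scale $\sigma$.} Multiplying \eqref{SPIH} through by the normalizing constant $Z_\sigma=\int_\Sigma w e^{-|x|^2/(2\sigma^2)}dx$ gives
\[
\int_\Sigma |\nabla g|^2 e^{-\frac{|x|^2}{2\sigma^2}}w\,dx\ \ge\ \frac{1+K_w}{\sigma^2}\inf_c\int_\Sigma |g-c|^2 e^{-\frac{|x|^2}{2\sigma^2}}w\,dx .
\]
Since $\frac{\lambda^2}{2}\cdot\frac{1}{\sigma^2}=1$, the prefactors cancel exactly. Moreover
\[
|g-c|^2e^{-|x|^2/\lambda^2}=\bigl|f-ce^{-\frac{|x|^2}{2\lambda^2}}\bigr|^2 .
\]
Hence
\[
\delta_w(f)\ge (1+K_w)\inf_c\int_\Sigma \bigl|f-ce^{-\frac{|x|^2}{2\lambda^2}}\bigr|^2w\,dx\ \ge\ (1+K_w)\,d_w^2(f,E_{HUPW}).
\]

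\textbf{Step 3: justify that $g$ is admissible.} We need $g\in X_{w,\sigma}$, or at least that Theorem \ref{T3.1} applies to $g$. I would first take $f\in A$, where $g$ is smooth, compactly supported and satisfies the Neumann condition: on $\partial\Sigma$ we have $\nabla g\cdot\eta=e^{|x|^2/(2\lambda^2)}\bigl(\nabla f\cdot\eta+\lambda^{-2}f\,x\cdot\eta\bigr)=0$, using $x\cdot\eta=0$. I would then pass to general $f\in S_w$ by density; both sides are continuous in the $S_w$ norm, and $\lambda$ depends continuously on $f$. This approximation, together with checking that $\int|\nabla g|^2e^{-|x|^2/\lambda^2}w<\infty$ (which follows from the expansion in the proof of Proposition \ref{UNP}), is the main technical point.

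\textbf{Step 4: equality when $w$ is partial.} Suppose $w$ does not depend on $x_1$, so $K_w=0$. Take $f=(a+x_1)e^{-|x|^2/(2\lambda_0^2)}$ with $a$ chosen suitably (for instance $a=0$).

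First I check that the scale in Proposition \ref{UNP} for this $f$ is exactly $\lambda_0$. This uses the fact that $G(\lambda)$ in that proof is minimized at $\lambda$: one computes $\int|\nabla f|^2 w$ and $\int |f|^2|x|^2w$ by Gaussian integration, with $w$ homogeneous and $x_1$-independent, and verifies that their ratio is $\lambda_0^4$. If it is not, I would instead take $f=x_1e^{-\beta|x|^2}$ with $\beta$ adjusted, or note that a linear $g$ is a Poincar\'e optimizer at every scale, so that only the self-consistency of $\lambda$ has to be verified.

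Granting this, $g=a+x_1$ attains equality in Theorem \ref{T3.1}, by the computation in Theorem \ref{T3} with the variable rescaled. One then checks that $d_w$ is realized by $\lambda=\lambda_0$ and $c=a$, i.e.\ that no other Gaussian in $E_{HUPW}$ is closer. This is the delicate part: I would verify it by explicitly minimizing over $(c,\lambda)$, using that $x_1$ is odd in the free variable and hence orthogonal to every radial Gaussian.
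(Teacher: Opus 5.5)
Your proof is correct and is essentially the paper's: the inequality comes from Proposition \ref{UNP} combined with Theorem \ref{T3.1} at scale $\lambda/\sqrt{2}$, applied to $g=fe^{|x|^2/(2\lambda^2)}$, and the equality case uses the same extremal ($a=0$, $\lambda_0=1$ is the paper's $x_ne^{-|x|^2/2}$). The self-consistency you left conditional does hold, so no fallback is needed: integrating $\partial_{x_1}\bigl(x_1e^{-|x|^2}w\bigr)$ gives $2\int_\Sigma x_1^2e^{-|x|^2}w\,dx=\int_\Sigma e^{-|x|^2}w\,dx$, hence $\int_\Sigma|\nabla f|^2w\,dx=\int_\Sigma|x|^2|f|^2w\,dx$, i.e.\ $\lambda=1$; and your oddness argument over all $(c,\lambda)$ is exactly what upgrades the fixed-$\lambda$ infimum to $d_w^2$, a step the paper passes over.
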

And for $$\Tilde{d}_w(f,F) = \inf_{c,\mathbf{d},\lambda \neq 0} \left(\int_\Sigma  \left|f - (c+\mathbf{d}\cdot x) e^{-\frac{|x|^2}{2\lambda^2}}\right|^2 w(x) dx \right)^{\frac{1}{2}}$$
we have improved stability: 
\begin{theorem}
    \label{S2}Assume that $w$ satisfies \eqref{star}. Let $\lambda > 0$ and $f\in S_w$ then we have: 
    $$\delta_w(f) -(1+K_w) d_w^2(f,E_{HUPW}) \geq \frac{1+K_w}{2} \Tilde{d}^2_w(f,F).$$
\end{theorem}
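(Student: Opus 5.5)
The plan is to follow the scheme of \cite{CFLL24}: combine the HUP identity of Proposition \ref{UNP} with the improved scale-dependent Poincar\'e inequality of Theorem \ref{T4.1}.

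\emph{Step 1: reduce to a Poincar\'e deficit.} Fix $f\in S_w$, $f\neq 0$, and take $\lambda=\left(\int_\Sigma |f|^2|x|^2w\,dx\big/\int_\Sigma|\nabla f|^2w\,dx\right)^{1/4}$. Set $g=fe^{\frac{|x|^2}{2\lambda^2}}$. Proposition \ref{UNP} then gives
\[
\delta_w(f)=\frac{\lambda^2}{2}\int_\Sigma |\nabla g|^2 w(x)e^{-\frac{|x|^2}{\lambda^2}}dx .
\]
The weight $w(x)e^{-|x|^2/\lambda^2}$ is, up to normalization, the scale-dependent measure $d\mu_{w,\lambda/\sqrt2}$. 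Writing $\sigma=\lambda/\sqrt2$, we have $\frac{\lambda^2}{2}=\sigma^2$.

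\emph{Step 2: apply Theorem \ref{T4.1}.} Apply Theorem \ref{T4.1} with scale $\sigma$ to $g$, using the unnormalized form obtained in its proof, where both sides are integrated against $w e^{-|x|^2/(2\sigma^2)}dx$. This gives
\[
\delta_w(f)\ge (1+K_w)\inf_c\int_\Sigma|g-c|^2 we^{-\frac{|x|^2}{\lambda^2}}dx+\frac{1+K_w}{2}\inf_{c,\mathbf d}\int_\Sigma|g-(c+\mathbf d\cdot x)|^2we^{-\frac{|x|^2}{\lambda^2}}dx .
\]

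\emph{Step 3: undo the substitution.} Since $g=fe^{|x|^2/(2\lambda^2)}$, we have $|g-c|^2e^{-|x|^2/\lambda^2}=|f-ce^{-|x|^2/(2\lambda^2)}|^2$. So the first term is at least $(1+K_w)d_w^2(f,E_{HUPW})$ and the second is at least $\frac{1+K_w}{2}\tilde d_w^2(f,F)$, because the infima defining $d_w$ and $\tilde d_w$ also range over $\lambda$. Moving the first term to the left-hand side gives the claim. The case $f=0$ is trivial.

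\emph{Main obstacle: the normalizations in Step 2.} Theorem \ref{T4.1} is stated with $\lambda^2\int|\nabla f|^2d\mu_{w,\lambda}$ on the left but unnormalized measures on the right. I would restate it consistently, multiplying through by $Z_\sigma=\int_\Sigma we^{-|x|^2/(2\sigma^2)}dx$ and checking the homogeneity scaling factors $\lambda^{n+\alpha}$ from the proof of Theorem \ref{T3.1}. The goal is to confirm that the constants $(1+K_w)$ and $\frac{1+K_w}{2}$ survive with exactly the Lebesgue-weighted integrals.

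I would also have to justify two further points. First, $g$ lies in $X_{w,\sigma}$; this follows by density from $f\in A$ and the Neumann condition, since $x\cdot\eta=0$ on $\partial\Sigma$ keeps $\nabla g\cdot\eta=0$. Second, the infima over $c$ and over $(c,\mathbf d)$ in Theorem \ref{T4.1} are taken independently, so they dominate the joint infimum in $\tilde d_w$ at this fixed $\lambda$. If the scaling in Theorem \ref{T4.1} turns out to be off, I would instead rederive it directly: apply Theorem \ref{T3} together with Corollary \ref{C0} at scale $1$ to $x\mapsto g(\sigma x)$, use homogeneity of $w$, and bound the affine correction term below by an infimum over affine functions.
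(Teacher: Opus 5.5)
Your proposal is correct and is essentially the paper's proof: the HUP identity of Proposition~\ref{UNP} combined with the improved scale-dependent Poincar\'e inequality of Theorem~\ref{T4.1}, applied to $g=fe^{|x|^2/(2\lambda^2)}$ at scale $\lambda/\sqrt{2}$, then the rewriting $|g-c|^2e^{-|x|^2/\lambda^2}=|f-ce^{-|x|^2/(2\lambda^2)}|^2$ and passage to the infima over $\lambda$. Using the unnormalized form of Theorem~\ref{T4.1} that its proof actually yields, rather than its mixed normalized/unnormalized statement, is the right way to get the constants $(1+K_w)$ and $\frac{1+K_w}{2}$ exactly.
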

\begin{proof}[Proof of Theorem \ref{S1.1} (Theorem \ref{S1})] 
We have from Theorem \ref{SPIH}
\begin{align*}
        \int_\Sigma \left|\nabla f \right|^{2} d\mu_{w,\lambda} 
        \geq \frac{1+K_w}{\lambda^2}\inf_{c} \int_\Sigma |f-c|^2  d\mu_{w,\lambda},
\end{align*}
where 
$$d\mu_{w,\lambda} =  \frac{w(x)e^{-\frac{|x|^2}{2\lambda^2}}dx}{\int_\Sigma  w(x)e^{-\frac{|x|^2}{2\lambda^2}} dx}.$$
Changing $\lambda = \frac{\lambda}{\sqrt{2}}$ we get 
\begin{align*}
        \int_\Sigma \left|\nabla f \right|^{2} w(x)e^{-\frac{|x|^2}{\lambda^2}}dx \geq \frac{2(1+K_w)}{\lambda^2}\inf_{c} \int_\Sigma |f-c|^2  w(x)e^{-\frac{|x|^2}{\lambda^2}}dx.
\end{align*}
Applying this to the HUP we get 
\begin{align*}
         \left(\int_\Sigma |\nabla f|^2 w(x)dx\right)^{\frac{1}{2}}\left(\int_\Sigma |f|^{2}| x|^2 w(x)dx\right)^{\frac{1}{2}} &-\frac{N+\alpha}{2} \int_\Sigma |f|^{2}w(x)dx \\
        &= \frac{\lambda^2}{2} \int_\Sigma \left| \nabla\left(f e^{\frac{|x|^2}{2 \lambda^2}}\right)\right|^2 e^{-\frac{|x|^2}{\lambda^2}} w(x) dx \\
        &\geq (1+K_w) \inf_c \int_\Sigma \left|fe^{\frac{|x|^2}{2\lambda^2}}-c\right|^2  w(x)e^{-\frac{|x|^2}{\lambda^2}}dx \\
        &= (1+K_w)  \inf_{c} \int_\Sigma \left|f-ce^{-\frac{|x|^2}{\lambda^2}}\right|w(x)dx.
    \end{align*}
Now assume that $w$ is partial and WLOG is independent from $x_n$. Let $f=x_ne^{-\frac{|x|^2}{2}} \notin E_{HUPW}$. 
$$\int_\Sigma |x|^2|f|^2w(x) dx = \int_\Sigma |x|^2 x_n^2e^{-|x|^2} w(x)dx .$$
Let $w_2(x) = x_n^2 w(x)$. We see that $w_2(x)$ is homogeneous of degree $\alpha+2$. Set $V = x e^{-|x|^2}w_2(x)$, then 
\begin{align*}
    \text{div}(V) = n e^{-|x|^2}w_2(x) - 2|x|^2 e^{-|x|^2}w_2(x) + e^{-|x|^2} \underbrace{x\cdot \nabla w_2(x)}_{=(\alpha+2 )w_2(x)}.
\end{align*}
From Divergence theorem and decay at $\infty$, we have 
\begin{align*}
    \int_\Sigma \text{div}(V) dx = 0 &= \int_\Sigma n e^{-|x|^2}w_2(x) - 2|x|^2 e^{-|x|^2}w_2(x) + e^{-|x|^2} (\alpha+2 )w_2(x) dx \\
    &= (n+\alpha+2) \int_\Sigma e^{-|x|^2}w_2(x)dx - 2\int_\Sigma |x|^2e^{-|x|^2}w_2(x)dx.
\end{align*}
So
$$\int_\Sigma |x|^2e^{-|x|^2}w_2(x)dx = \frac{n+\alpha+2}{2} \int_\Sigma e^{-|x|^2}w_2(x)dx. $$
We also know that $\nabla f = (e_n - x_nx)e^{-\frac{|x|^2}{2}}$, so 
\begin{align*}
            \int_\Sigma |\nabla f|^2 w(x)dx &= \int_\Sigma |e_n-x_nx|^2w(x)e^{-|x|^2}dx \\
            &= \int_\Sigma (1-2x_n^2+x_n^2|x|^2)w(x)e^{-|x|^2}dx. 
\end{align*}
Let $Z = \left(0,\dots, 0, x_nw(x)e^{-|x|^2}\right)$, then 
$$\text{div}(Z) = w(x)e^{-|x|^2} - 2x_n^2 e^{-|x|^2}w(x) + x_n e^{-|x|^2} \p_nw(x).$$
Since $w(x) $ is independent of $x_n$, the last term is zero. Using divergence theorem again we get 
$$2\int_\Sigma x_n^2 e^{-|x|^2}w(x) dx =  \int_\Sigma  e^{-|x|^2}w(x)dx.$$
Then we see that 
\begin{align*}
            \int_\Sigma |\nabla f|^2 w(x)dx &= \int_\Sigma x_n^2|x|^2w(x)e^{-|x|^2}dx  = \frac{n+\alpha+2}{2} \int_\Sigma e^{-|x|^2}x_n^2 w(x)dx. 
\end{align*}
Thus on the LHS of HUP we have 

\begin{gather*}
    \left(\frac{n+\alpha+2}{2} \int_\Sigma e^{-|x|^2}x_n^2 w(x)dx \right)^{\frac{1}{2}}\left(\frac{n+\alpha+2}{2} \int_\Sigma e^{-|x|^2}x_n^2 w(x)dx \right)^{\frac{1}{2}} - \frac{n+\alpha}{2}\int_\Sigma e^{-|x|^2}x_n^2w(x)dx \\
    = \int_\Sigma e^{-|x|^2}x_n^2 w(x)dx.
\end{gather*}
On the RHS we have 
$$(1+K_w)\inf_{c} \int_\Sigma \left|f-ce^{-\frac{|x|^2}{\lambda^2}}\right|w(x)dx. $$
The assumption that $w$ is independent of $x_n$ forces $K_w=0$. We also have 
$$\lambda = \left(\frac{\int_\Sigma |x|^2|f|^2w(x)dx}{\int_\Sigma |\nabla f|^2 w(x)dx}\right)^{\frac{1}{4}} = 1.$$
So 
\begin{align*}
        RHS = \inf_c\left(\int_\Sigma \left(x_n^2e^{-|x|^2} - \underbrace{2cx_ne^{-|x|^2}}_{\text{odd in $x_n$}} + c^2e^{-|x|^2}\right)w(x)dx \right)
        &= \int_\Sigma x_n^2e^{-|x|^2}w(x)dx = LHS.
\end{align*}
\end{proof}
\begin{proof}[Proof of Theorem \ref{S2}]
We have from Proposition \ref{UNP} and Theorem \ref{T4.1}
    \begin{align*}
         &\left(\int_\Sigma |\nabla f|^2 w(x)dx\right)^{\frac{1}{2}}\left(\int_\Sigma |f|^{2}| x|^2 w(x)dx\right)^{\frac{1}{2}} -\frac{N+\alpha}{2} \int_\Sigma |f|^{2}w(x)dx \\
        &= \frac{\lambda^2}{2} \int_\Sigma \left| \nabla\left(f e^{\frac{|x|^2}{2 \lambda^2}}\right)\right|^2 e^{-\frac{|x|^2}{\lambda^2}} w(x) dx \\
        &\geq (1+K_w)\inf_{c, \lambda \neq 0}\int_\Sigma |f- ce^{-\frac{|x|^2}{2 \lambda^2}}|^2 w(x) dx + \frac{1+K_w}{2}\inf_{c, \mathbf{d}, \lambda \neq 0} \int_\Sigma \left|f - (c+\mathbf{d}\cdot x) e^{-\frac{|x|^2}{2\lambda^2}}\right|^2 w(x) dx .
    \end{align*}
\end{proof}

\section{Weighted logarithmic Sobolev inequalities}
\subsection{Proofs of Theorem \ref{T4}, Theorem \ref{T5}, Theorem \ref{T6} and Corollary \ref{C2}}
In this subsection, we will use Curvature Dimension condition to establish the logarithmic Sobolev inequality for weighted Gaussian measure.

\begin{proof}[Proof of Theorem \ref{T4}]
WLOG, assume that $f \geq 0$. We have from Theorem \ref{T1}
\begin{gather*}
        \frac{1}{q-p}\left( \left(\int_\Sigma f^{q} d\mu_{w} \right)^{\frac{2}{q}} - \left(\int_\Sigma f^p  d\mu_{w}\right)^\frac{2}{p}\right)  \leq \frac{1}{1+K_w} \left(\int_\Sigma |\nabla f|^q d\mu_w\right)^{\frac{2}{q}}.
    \end{gather*}
    Consider $$\phi(t) = \left(\int_\Sigma f^t  d\mu_{w}\right)^\frac{2}{t} \implies \log \phi = \frac{2}{t} \log \left(\int_\Sigma f^t d\mu_w\right).$$ 
    Then $$\lim_{p\to q} \frac{\phi(q) -\phi(p)}{q-p} = \phi'(q).$$
    \begin{align*}
        (\log \phi )' = \frac{\phi'}{\phi} &= -\frac{2}{t^2} \log \left(\int_\Sigma f^t d\mu_w\right) + \frac{2}{t}\frac{d}{dt}\log \left(\int_\Sigma f^t d\mu_w\right) \\
        &= -\frac{2}{t^2}\log \left(\int_\Sigma f^t d\mu_w\right) + \frac{2}{t}\frac{\int_\Sigma f^t \log f d\mu_w}{\int_\Sigma f^t d\mu_w }.
    \end{align*}
    So 
    \begin{align*}
        \phi'(t) &= \left(\int_\Sigma f^t  d\mu_{w}\right)^\frac{2}{t}   \left(-\frac{2}{t^2}\log \left(\int_\Sigma f^t d\mu_w\right)  + \frac{2}{t}\frac{\int_\Sigma f^t \log f d\mu_w}{\int_\Sigma f^t d\mu_w }\right) \\
        &= -\frac{2}{t^2}\left(\int_\Sigma f^t  d\mu_{w}\right)^{\frac{2}{t}}\log \left(\int_\Sigma f^t d\mu_w\right) + \frac{2}{t} \left(\int_\Sigma f^t \log f  d\mu_{w}\right) \left(\int_\Sigma f^t  d\mu_{w}\right)^{\frac{2}{t} -1}.
    \end{align*}
    So the inequality becomes 
    $$-\frac{2}{q^2}\left(\int_\Sigma f^q  d\mu_{w}\right)^{\frac{2}{q}}\log \left(\int_\Sigma f^q d\mu_w\right) + \frac{2}{q} \left(\int_\Sigma f^q \log f  d\mu_{w}\right) \left(\int_\Sigma f^q  d\mu_{w}\right)^{\frac{2}{q} -1} \leq \frac{1}{1+K_w}\left(\int_\Sigma |\nabla f|^q d\mu_w\right)^{\frac{2}{q}}.$$
    Factoring out $ \frac{2}{q^2}\left(\int_\Sigma f^q  d\mu_{w}\right)^{\frac{2}{q} -1} $, we get 
    \begin{align*}
        &\frac{2}{q^2}\left(\int_\Sigma f^q  d\mu_{w}\right)^{\frac{2}{q} -1} \left(q\int_\Sigma f^q \log f  d\mu_{w}- \int_{\Sigma} f^q d\mu_w \log \left(\int_\Sigma f^q d\mu_w\right)\right) \\
        =& \frac{2}{q^2}\left(\int_\Sigma f^q  d\mu_{w}\right)^{\frac{2}{q} -1} \left(\int_\Sigma f^q \log f^q  d\mu_{w}- \int_{\Sigma} f^q d\mu_w \log \left(\int_\Sigma f^q d\mu_w\right)\right) \\
        =& \frac{2}{q^2}\left(\int_\Sigma f^q  d\mu_{w}\right)^{\frac{2}{q} -1} \text{Ent}_{d\mu_w}(f^q).
    \end{align*}
    So the inequality becomes 
    $$\frac{2}{q^2}\left(\int_\Sigma f^q  d\mu_{w}\right)^{\frac{2}{q} -1} \text{Ent}_{d\mu_w}(f^q) \leq \frac{1}{1+K_w}\left(\int_\Sigma |\nabla f|^q d\mu_w\right)^{\frac{2}{q}}.$$
    If we normalize $f$ so that $\int_\Sigma f^q d\mu_w = 1$, then the inequality simplifies to 
    $$\frac{2}{q}\int_\Sigma f^q \log f  d\mu_{w} \leq \frac{1}{1+K_w}\left(\int_\Sigma |\nabla f|^q d\mu_w\right)^{\frac{2}{q}}.$$ 
\end{proof}

\begin{proof}[Proof of Theorem \ref{T5}]
    As in the proof of Theorem \ref{T1}, our measure $d\mu_w$ satisfies Curvature-Dimension condition $CD(1+K,\infty)$. Therefore, by \cite[Proposition 5.7.1]{BGL14}, we obtain a Logarithmic Sobolev inequality with constant $\frac{1}{1+K_w}$:
\begin{equation*}
    \text{Ent}_{d\mu_w}(f^2) \leq \frac{2}{1+K_w}\int_\Sigma |\nabla f|^2 d\mu_w .
\end{equation*}
Now we will show that for $w$ partial, the equality is achieved. Without loss of generality, suppose that $w$ is independent of $x_n$ and consider the function $f = e^{bx_n}$. Then $K_w=0$ and we can compute:
\begin{equation*}
    \int_\Sigma f^2 d\mu_w = \frac{1}{\int_\Sigma w(x)e^{-\frac{|x|^2}{2}} dx}\int_\Sigma e^{2bx_n} w(x) e^{-\frac{|x|^2}{2}}dx = \frac{\int_{-\infty}^\infty e^{2bx_n - \frac{1}{2}x_n^2}dx_n}{\int_{-\infty}^\infty e^{-\frac{x_n^2}{2}}dx} = e^{2b^2}.
\end{equation*}
Then 
\begin{align*}
    \text{Ent}_{d\mu_w}(f^2) &= \int_\Sigma f^2 \log f^2 d\mu_w - \int_\Sigma f^2 d\mu_w\log\left(\int_\Sigma f^2 d\mu_w\right) \\
    &= \int_\Sigma e^{2bx_n} (2bx_n) d\mu_w - e^{2b^2} (2b^2) \\
    &= 2b(2be^{2b^2}) - 2b^2 e^{2b^2} \\
    &= 2b^2 e^{2b^2}.
\end{align*}
On the other hand 
\begin{align*}
    \int_\Sigma |\nabla f|^2 d\mu_w = \int_\Sigma b^2 e^{2bx_n}d\mu_w = b^2 \frac{\int_{-\infty}^\infty e^{2bx_n - \frac{1}{2}x_n^2}dx_n}{\int_{-\infty}^\infty e^{-\frac{x_n^2}{2}}dx} = b^2e^{2b^2}.
\end{align*}
So we have that 
$$\text{Ent}_{d\mu_w}(f^2) = 2 \int_\Sigma |\nabla f|^2 d\mu_w.$$
\end{proof}
Next we will show how this implies a logarithmic Sobolev inequality for log concave, homogeneous measure $d\nu = w(x) dx$.

\begin{proof}[Proof of Theorem \ref{T6}]
    Recall that $d\mu_w = C_w w(x) e^{-\frac{|x|^2}{2}}dx$ where $C_w = \left(\int_\Sigma w(x) e^{-\frac{|x|^2}{2}}dx\right)^{-1}$. Define $h(x) = \sqrt{C_w} e^{-\frac{|x|^2}{4}}$ so that $h^2 d\nu = d\mu_w$. Since $w$ is log-concave, by Theorem \ref{T5}, the homogeneous Gaussian LSI~\eqref{LSIHG} holds for $K_w = 0$. Then we can take $F = \frac{f}{h}$. On the left hand side we get 
    \begin{align*}
        \ent(F^2) &= \int_\Sigma F^2 \log F^2d\mu_w- \int_\Sigma F^2 d\mu_w\log\left(\int_\Sigma F^2 d\mu_w\right)\\
        &= \int_\Sigma  f^2 \log\left(\frac{f^2}{h^2}\right)d\nu - \int_\Sigma f^2 d\nu \log\left(\int_\Sigma f^2 d\nu\right) \\
        &= \text{Ent}_{d\nu}(f^2) - \int_\Sigma f^2 \log h^2 d\nu.
    \end{align*}
    On the right hand side we get 
    \begin{align*}
        \int_\Sigma |\nabla F|^2 d\mu_w&= \int_\Sigma 
        \left|\frac{\nabla f - f\nabla \log h}{h}\right|^2 d\mu_w\\
        &= \int_\Sigma \left|\nabla f +\frac{xf}{2}\right|^2 d\nu \\
        &= \int_\Sigma |\nabla f|^2 d\nu + \int_\Sigma \nabla f\cdot xf d\nu +\int_\Sigma \frac{1}{4}|x|^2 f^2 d\nu.
    \end{align*}
    The middle term we integrate by parts to get 
    \begin{align*}
        \int_\Sigma \nabla f\cdot xf d\nu &= - \frac{1}{2}\int_\Sigma f^2 \text{div}(xw)dx \\
        &= - \frac{1}{2}\int_\Sigma f^2 (n w+ x\cdot\nabla w)dx\\
        &= -\frac{n+\alpha}{2}\int_\Sigma f^2 d\nu.
    \end{align*}
    Putting everything together and simplifying: 
    \begin{align*}
        \text{Ent}_{d\nu}(f^2) \leq 2\bigg(\int_\Sigma |\nabla f|^2 d\nu -\frac{n+\alpha}{2}\int_\Sigma f^2 d\nu &+\int_\Sigma \frac{1}{4}|x|^2 f^2 d\nu\bigg) \\
        &+ \int_\Sigma f^2 \log C_w d\nu - \int_\Sigma \frac{1}{2}|x|^2f^2 d\nu .
    \end{align*}
    To simplify the notation let 
    \begin{align*}
        A = \int_\Sigma |\nabla f|^2 d\nu, \qquad B = \int_\Sigma f^2 d\nu, \qquad D= \int_\Sigma |x|^2 f^2 d\nu.
    \end{align*}
    Then the inequality reads
    \begin{align*}
        \text{Ent}_\nu(f^2) &\leq 2\bigg(A -\frac{n+\alpha}{2}B +\frac{1}{4}D\bigg) + B\log C_w - \frac{1}{2}D \\
        &= 2A + \left(\log C_w - (n+\alpha)\right)B + \left(\frac{1}{2}-\frac{1}{2}\right)D.
    \end{align*}
    The last term is identically zero. Thus we get
    \begin{align}
        \text{Ent}_\nu(f^2) 
        &\leq 2\int_\Sigma |\nabla f|^2 d\nu + \left(\log C_w - (n+\alpha)\right)\int_\Sigma f^2 d\nu  .\label{helper6.1}%
    \end{align}
      We want make a change of function $f_\lambda(x) = \lambda^{\frac{n+\alpha}{2}}f(\lambda x)$. The norm $|| \cdot||_{2,\nu}$ is preserved under this rescaling. 
      \begin{align*}
        ||f_\lambda||^2_{2,\nu} &= \int_\Sigma \lambda^{n+\alpha}f(\lambda x) w(x) dx = \int_\Sigma \lambda^{n+\alpha}f(y) w\left(\frac{y}{\lambda}\right) \lambda^{-n} dy \\
        &= \int_\Sigma \lambda^{n+\alpha}\lambda^{-n-\alpha}f(y)w(y)dy = ||f||^2_{2,\nu}.
    \end{align*}
    \begin{align*}
        \int_\Sigma |\nabla f_\lambda|^2 d\nu &= \int_\Sigma \lambda^{n+\alpha }\left|(\nabla f)(\lambda x)\right|^2 d\nu = \lambda^{n+\alpha }\int_\Sigma |\lambda\nabla f(\lambda x)|^2 d\nu \\
        &= \lambda^{n+\alpha + 2}\int_\Sigma |\nabla f(y)|^2 \lambda^{-n-\alpha} w(y)dy \\
        &= \lambda^2 \int_\Sigma |\nabla f|^2 d\nu.
    \end{align*}
    \begin{align*}
        \text{Ent}_\nu(f_\lambda^2) &= \int_\Sigma f_\lambda^2 \log f_\lambda^2 d\nu - \int_\Sigma f_\lambda^2 d\nu \log\left(\int_\Sigma f_\lambda^2 d\nu\right) \\
        &= \int_\Sigma f_\lambda^2 \log \lambda^{n+\alpha} + f_\lambda^2 \log f^2(\lambda x) d\nu - \int_\Sigma f^2 d\nu \log\left(\int_\Sigma f^2 d\nu\right) \\
        &= \log\lambda^{n+\alpha}\int_\Sigma \lambda^{n+\alpha}f^2(\lambda x)w(x) dx + \int_\Sigma \lambda^{n+\alpha} f^2(\lambda x) \log f^2(\lambda x) w(x) dx \\
        &-\int_\Sigma f^2 d\nu \log\left(\int_\Sigma f^2 d\nu\right) \\
        &= \text{Ent}_{d\nu}(f^2) + \log\lambda^{n+\alpha} \int_\Sigma f^2 d\nu .
    \end{align*}
    Rewriting inequality \ref{helper6.1} for $f_\lambda$ we have
    \begin{align}
        \text{Ent}_{d\nu}(f^2) + \log\lambda^{n+\alpha} \int_\Sigma  f^2 d\nu  \leq 2\lambda^2 \int_\Sigma |\nabla f|^2 d\nu + \left(\log C_w - (n+\alpha)\right)\int_\Sigma  f^2 d\nu. \label{helper6.2}%
    \end{align}
    Consider the function $R(\lambda)$ 
    \begin{equation*}
        R(\lambda) = 2\lambda^2 \int_\Sigma |\nabla f|^2 d\nu + \left(\log C_w - (n+\alpha)\right)\int_\Sigma  f^2 d\nu -  \log\lambda^{n+\alpha} \int_\Sigma  f^2 d\nu.
    \end{equation*}
    Then $R$ achieves a minimum at 
    \begin{equation*}
        \lambda^2 = \frac{(n+\alpha)}{4} \frac{\int_\Sigma  f^2 d\nu}{\int_\Sigma  |\nabla f|^2 d\nu}.
    \end{equation*}
    Optimizing inequality \ref{helper6.2} in $\lambda$ we get 
     \begin{align*}
        \text{Ent}_{d\nu}(f^2) &\leq 2\int_\Sigma  |\nabla f|^2 d\nu\frac{(n+\alpha)}{4} \frac{\int_\Sigma  f^2 d\nu}{\int_\Sigma  |\nabla f|^2 d\nu} + \left(\log C_w - (n+\alpha)\right)\int_\Sigma  f^2 d\nu \\
        &- \frac{n+\alpha}{2} \int_\Sigma  f^2 d\nu\log\frac{(n+\alpha)}{4} \frac{\int_\Sigma  f^2 d\nu}{\int_\Sigma  |\nabla f|^2 d\nu}.
    \end{align*}
    Simplifying 
    \begin{align*}
         \text{Ent}_{d\nu}(f^2) &\leq \frac{n+\alpha}{2}\int_\Sigma  f^2 d\nu +  \left(\log C_w - (n+\alpha)\right)\int_\Sigma  f^2 d\nu \\
        &- \frac{n+\alpha}{2} \int_\Sigma  f^2 d\nu\log\frac{(n+\alpha)}{4} \frac{\int_\Sigma  f^2 d\nu}{\int_\Sigma  |\nabla f|^2 d\nu} \\
        &= \frac{n+\alpha}{2}\int_\Sigma  f^2 d\nu \left(1+ \log C_w^{\frac{2}{n+\alpha}} - 2 - \log\frac{(n+\alpha)}{4} \frac{\int_\Sigma  f^2 d\nu}{\int_\Sigma  |\nabla f|^2 d\nu}\right) \\
        &=  \frac{n+\alpha}{2}\int_\Sigma  f^2 d\nu \left(\log e^{1-2}+ \log C_w^{\frac{2}{n+\alpha}} - \log\frac{(n+\alpha)}{4} \frac{\int_\Sigma  f^2 d\nu}{\int_\Sigma  |\nabla f|^2 d\nu}\right) \\
        &= \frac{n+\alpha}{2}\int_\Sigma  f^2 d\nu \log \left(\frac{4e^{-1}  C_w^{\frac{2}{n+\alpha}} }{(n+\alpha)} \frac{\int_\Sigma  |\nabla f|^2 d\nu}{\int_\Sigma  f^2 d\nu}\right) \\
        &= \frac{n+\alpha}{2}\int_\Sigma f^2 d\nu \ \log \left(C_{LSIH}\frac{\int_\Sigma |\nabla f|^2 d\nu}{\int_\Sigma f^2 d\nu}\right).
    \end{align*}
   Next we show that the constant is sharp. First, we consider the case when $A = 1$ and $f = e^{-\frac{|x|^2}{4}}$. Then 
    \begin{equation*}
        \int_\Sigma f^2 d\nu = \int_\Sigma f^2 w(x) dx = \int_\Sigma w(x)e^{-\frac{|x|^2}{2} }dx = C_w^{-1}.
    \end{equation*}
    On the left hand side we have
    \begin{align*}
        \text{Ent}_{d\nu}(f^2) &= \int_\Sigma f^2 \log f^2 d\nu - \int_\Sigma f^2 d\nu \log \left(\int_\Sigma f^2 d\nu\right) \\
        &= \int_\Sigma e^{-\frac{|x|^2}{2}} \left(-\frac{|x|^2}{2}\right) d\nu - \frac{1}{C_w}\log \frac{1}{C_w} \\
        &= -\frac{1}{2}\int_\Sigma |x|^2  e^{-\frac{|x|^2}{2}} d\nu + \frac{\log C_w}{C_w}.
    \end{align*}
    To compute this integral, let $V(x) = x e^{-\frac{|x|^2}{2}}  w(x)$. Then 
    \begin{align*}
        \text{div}(V) &= ne^{-\frac{|x|^2}{2}}  w(x) +\left(-xe^{-\frac{|x|^2}{2}}\right)\cdot xw(x) + xe^{-\frac{|x|^2}{2}}\cdot\nabla w \\
        &= (n+\alpha)e^{-\frac{|x|^2}{2}}w(x) - |x|^2 e^{-\frac{|x|^2}{2}}w(x).
    \end{align*}
    From the divergence theorem and decay at $\infty$ we have 
    $$\int_\Sigma \text{div}(V) dx = 0.$$
    So $$\int_\Sigma |x|^2 e^{-\frac{|x|^2}{2}} w(x) dx = (n+\alpha)\int_\Sigma e^{-\frac{|x|^2}{2}} w(x) dx = \frac{n+\alpha}{C_w}.$$
    Thus $$\text{Ent}_{d\nu}(f^2) = -\frac{n+\alpha}{2C_w} + \frac{\log C_w}{C_w}.$$
    $$\nabla f = -\frac{x}{2} e^{-\frac{|x|^2}{4}}.$$
    And 
    $$\int_\Sigma |\nabla f|^2 d\nu = \frac{1}{4}\int_\Sigma |x|^2e^{-\frac{|x|^2}{2}}d\nu = \frac{n+\alpha}{4C_w}. $$
    $$\frac{\int_\Sigma |\nabla f|^2 d\nu }{\int_\Sigma f^2 d\nu} = \frac{(n+\alpha )/4C}{\frac{1}{C_w}} = \frac{n+\alpha}{4}.$$
    So on the right hand side we have 
    \begin{align*}
        RHS &=\frac{n+\alpha}{2} \int_\Sigma f^2 d\nu \  \log \left(\frac{4}{e(n+\alpha)}C_w^{\frac{2}{n+\alpha}} \frac{\int_\Sigma |\nabla f|^2 d\nu}{\int_\Sigma f^2 d\nu} \right) \\
        &= \frac{n+\alpha}{2} \frac{1}{C_w} \log\left(\frac{4}{e(n+\alpha)}C_w^{\frac{2}{n+\alpha}} \frac{n+\alpha}{4}\right) \\
        &= \frac{n+\alpha}{2} \frac{1}{C_w} \log\left(\frac{C_w^{\frac{2}{n+\alpha}}}{e} \right) \\
        &= \frac{n+\alpha}{2C_w} \left(\frac{2}{n+\alpha}\log C_w - 1\right) \\
        &= \frac{\log C_w}{C_w}- \frac{n+\alpha}{2C_w}\\
        &= LHS.
    \end{align*}
    Now for the general case $A \neq 1$ we just note that the entropy term 
    \begin{equation*}
        \ent\left(g^2\right) = \ent\left((Af)^2\right) = \ent(A^2 f^2) = A^2 \ent(f^2).
    \end{equation*}
    And on the right hand side we have 
    $$A^2 \ \frac{n+\alpha}{2} \int_\Sigma f^2 d\nu \log\left(C_{LSIH} \frac{A^2 \int_\Sigma |\nabla f|^2 d\nu}{A^2 \int_\Sigma f^2 d\nu }\right).$$
    So we still have equality for $g = Af$. 
\end{proof}

Next we will prove the equivalence of the homogeneous LSI and the Gaussian homogeneous LSI. 
\begin{proof}[Proof of corollary \ref{C2}]
    Theorem \ref{T6} gives us one direction. To show the other direction we use the following inequality. For $s,t>0$, $$\log t \leq st -\log s -1. $$
    Let $$t = \frac{4}{e(n+\alpha)}C_w^{\frac{2}{n+\alpha}} \frac{\int_\Sigma |\nabla f|^2 d\nu}{\int_\Sigma f^2 d\nu}.$$
    and take $s =  \frac{e}{C_w^{\frac{2}{n+\alpha}}}$. Then 
    \begin{align*}
        \text{Ent}_{d\nu}(f^2)  &\leq \frac{n+\alpha}{2}\int_\Sigma f^2 d\nu \  \log \left(\frac{4}{e(n+\alpha)}C_w^{\frac{2}{n+\alpha}} \frac{\int_\Sigma |\nabla f|^2 d\nu}{\int_\Sigma f^2 d\nu} \right) \\
        &\leq \frac{n+\alpha}{2}\int_\Sigma f^2 d\nu \  \left(\frac{e}{C_w^{\frac{2}{n+\alpha}}} \frac{4}{e(n+\alpha)}C_w^{\frac{2}{n+\alpha}} \frac{\int_\Sigma |\nabla f|^2 d\nu}{\int_\Sigma f^2 d\nu} - \log\frac{e}{C_w^{\frac{2}{n+\alpha}}} - 1 \right) \\
        &= \frac{n+\alpha}{2}\int_\Sigma f^2 d\nu \ \left(\frac{4}{n+\alpha}\frac{\int_\Sigma |\nabla f|^2 d\nu}{\int_\Sigma f^2 d\nu} 
        - \log e + \frac{2}{n+\alpha}\log C_w - 1\right) \\
        &= 2\int_\Sigma |\nabla f|^2 d\nu - (n+\alpha) \int_\Sigma f^2 d\nu \  + \log C_w \int_\Sigma f^2 d\nu. \ 
    \end{align*}
    Adding the term $\frac{1}{2}\int_\Sigma |x|^2 f^2d\nu$ to both sides we can rewrite this as 
    \begin{equation*}
        \text{Ent}_{d\nu}(f^2)  + \frac{1}{2}\int_\Sigma |x|^2 f^2d\nu - \int_\Sigma \log C_w f^2d \nu \leq 2\left(\int_\Sigma |\nabla f|^2 d\nu - (n+\alpha)\int_\Sigma f^2 d\nu +\frac{1}{4}\int_\Sigma |x|^2 f^2 d\nu\right).
    \end{equation*}
    After the substitution $f = Fh$ where $h =  \sqrt{C_w}e^{-\frac{|x|^2}{4}}$ we get the result
    $$ \text{Ent}_{d\mu_w}(F^2)\leq 2\int_\Sigma |\nabla F|^2 d\mu_w.$$
\end{proof}

\end{document}